\newcolumntype{Y}{>{\small\raggedright\arraybackslash}X}
\newtheorem{theorem}{Theorem}[section]
\theoremstyle{definition}
\newtheorem{thm}{Theorem}[section]
\newtheorem{defi}[thm]{Definition}
\newtheorem{prop}[thm]{Proposition}
\newtheorem{lem}[thm]{Lemma}
\newtheorem{rmk}[thm]{Remark}
\newtheorem*{notations}{Notations}
\newtheorem*{acknowledgement}{Acknowledgement}
\theoremstyle{remark}
\def\ord{\textup{ord}}
\def\Fix{\textup{Fix}}
\def\Pic{\textup{Pic}}
\def\ps@pprintTitle{%
 \let\@oddhead\@empty
 \let\@evenhead\@empty
 \def\@oddfoot{}%
 \let\@evenfoot\@oddfoot}
\begin{document}

\begin{frontmatter}

\title{Quasiphantom categories on a family of \\ surfaces isogenous to a higher product}

\author[mymainaddress]{Hyun Kyu Kim}\corref{mycorrespondingauthor}
\cortext[mycorrespondingauthor]{Corresponding author}
\ead{hkim@kias.re.kr, hyunkyu87@gmail.com}

\author[mysecondaryaddress]{Yun-Hwan Kim}
\ead{yunttang@snu.ac.kr}

\author[mythirdaddress]{Kyoung-Seog Lee}
\ead{kyoungseog02@gmail.com}

\address[mymainaddress]{School of Mathematics, Korea Institute for Advanced Study, 
85 Hoegiro, Dongdaemun-gu, 
Seoul 02455, Republic of Korea}
\address[mysecondaryaddress]{Department of Mathematical Sciences, Seoul National University, GwanAkRo 1, Gwanak-Gu, Seoul 08826, Republic of Korea}
\address[mythirdaddress]{Center for Geometry and Physics, Institute for Basic Science (IBS), Pohang 37673, Republic of Korea}

\begin{abstract}
We construct exceptional collections of line bundles of maximal length 4 on $S=(C \times D)/G$ which is a surface isogenous to a higher product with $p_g=q=0$ where $G=G(32,27)$ is a finite group of order 32 having number 27 in the list of Magma library. From these exceptional collections, we obtain new examples of quasiphantom categories as their orthogonal complements.
\end{abstract}

\begin{keyword}
Derived categories, Exceptional collections, Surfaces isogenous to a higher product, Quasiphantom categories
\end{keyword}

\end{frontmatter}

\section{Introduction}
Quasiphantom categories are surprising new subcategories in the derived categories of algebraic varieties first discovered by B\"{o}hning, Bothmer and Sosna in \cite{BBS}. Their discovery provides new perspectives on the study of derived categories of algebraic varieties and recently many examples of quasiphantom categories were constructed by many authors(see \cite{AO, BBKS, BBS, CL, Coughlan, Fakhruddin, GKMS, GO, GS, Keum, Lee, Lee2, LS} for more details). However their structures are quite mysterious and we do not know whether every surface of general type with $p_g=q=0$ has a quasiphantom category in its derived category. 

A surface $S$ which is isomorphic to $(C \times D)/G$ where $C, D$ are curves of genus $\geq 2$ and $G$ is a finite group acting on $C \times D$ freely is called a surface isogenous to a higher product. Surfaces isogenous to a higher product are interesting and important classes of surfaces of general type. They play an important role in the study of moduli spaces of general type surfaces. Bauer, Catanese and Grunewald classified surfaces isogenous to a higher product of unmixed type with $p_g=q=0$ in \cite{BCG2}. In particular, they proved that the possible list of groups $G$ is $\mathbb{Z}_2^3$, $\mathbb{Z}_2^4$, $\mathbb{Z}_3^2$, $\mathbb{Z}_5^2$, $\mathbb{Z}_2 \times D_4$, $S_4$, $\mathbb{Z}_2 \times S_4$, $G(16,3)$, $G(32,27)$, $A_5$.

In \cite{Coughlan, GS, Lee, Lee2, LS}, the authors constructed quasiphantom categories in derived categories of surfaces isogenous to a higher product except $G=G(32,27)$, $A_5$ cases. It is very natural to expect that derived categories of all surfaces isogenous to a higher product with $p_g=q=0$ have quasiphantom categories in their derived categories. However quasiphantom categories in derived categories of surfaces had not been constructed for $G=G(32,27)$, $A_5$ cases. 

In this paper we construct exceptional collections of line bundles of maximal length 4 on $S=(C \times D)/G$ which is a surface isogenous to a higher product with $p_g=q=0$ and $G$ is $G(32,27)$. From these exceptional collections we can obtain new examples of quasiphantom categories. They are obtained as the orthogonal complements of the exceptional collections of maximal length 4 on $S$.

\begin{acknowledgement}
We are grateful to Changho Keem, Young-Hoon Kiem for their words of encouragement. We thank Fabrizio Catanese for answering many questions and helpful discussions. We also thank Alexander Kuznetsov for his suggestion to use a computer program to compute the derived categories of the family of surfaces considered in this paper. Part of this work was done when the third named author was a research fellow of Korea Institute for Advanced Study. He thanks KIAS for wonderful working condition and kind hospitality. This work was also supported by IBS-R003-Y1. Last but not least, we thank the referees for their valuable comments which helped us to improve the manuscript.
\end{acknowledgement}

\begin{notations}
We will work over $\mathbb{C}$. Derived category of a variety will mean the bounded derived category of coherent sheaves on the variety. In this paper, $G$ denotes a finite group, $\widehat{G}=Hom(G,\mathbb{C}^*)$ denotes the character group of $G$, and $G(32,27)$ means the finite group of order 32 having number 27 in the list in Magma library.
\end{notations}

\section{Preliminaries}
In this section we recall several definitions and facts which we will use later.
\subsection{Surfaces isogenous to a higher product with $p_g=q=0$}
We review the basic theory of surfaces isogenous to a higher product with $p_g=q=0$.
\begin{defi}\cite[Definition 2.1]{BCF}
A surface $S$ of general type is said to be isogenous to a higher product if $S$ is isomorphic to $(C \times D)/G,$ where $C$ and $D$ are curves of genus at least 2, and $G$ is a finite group acting freely on $C \times D$. We call $S$ of unmixed type if $G$ acts diagonally on $C \times D$.
\end{defi}

Bauer, Catanese and Grunewald classified surfaces isogenous to a higher product with $p_g=q=0$ in \cite{BCG2}. Their strategy was to classify all groups acting freely on product of two curves with some specified conditions. Let $ S = (C \times D)/G $ be a surface isogenous to a higher product of unmixed type with $p_g=q=0$. Consider the two quotient maps $C \to C/G$ and $D \to D/G$. Since $q=0$, we see that $C/G \cong \mathbb{P}^1 \cong D/G$. These quotient maps give several group theoretic data. We recall several terminologies following \cite{BCG2}.


\begin{defi}\cite{BCG2}
Let $G$ be a group and $r$ be a natural number with $r \geq 2$.
	\begin{enumerate}[(1)]
		\item An $r$-tuple $T=[g_1,\cdots,g_r] \in G^r$ is called a spherical system of generators of $G$ if $g_1,\cdots,g_r$ is a system of generators of $G$ and $g_1 \cdots g_r=1$.
		\item Let $A=[m_1, \cdots, m_r]$ be an $r$-tuple of natural numbers with $2 \leq m_1 \leq \cdots \leq m_r$ then a spherical system of generators $T=[g_1,\cdots,g_r]$ is said to have type $A=[m_1, \cdots, m_r]$ if there is a permutation $\tau \in S_r$ such that $\ord(g_i)=m_{\tau(i)}$ for all $i$. 
		\item The stabilizer set $\Sigma(T)$ of a spherical system of generators $T=[g_1,\cdots,g_r]$ is defined as follows: 
$$ \Sigma(T)= \bigcup_{g \in G} \bigcup_{j \in \mathbb{Z}} \bigcup_{i=1}^r \{g g_i^j g^{-1}\} =\{gg^j_ig^{-1}\mid g\in G, j\in \mathbb{Z}, i=1,\cdots, r \}.$$
		\item Let $T_1,T_2$ be a pair of spherical systems of generators $(T_1,T_2)$ of $G$ is called disjoint if $\Sigma(T_1) \cap \Sigma(T_2)=\{ 1 \}$.
	\end{enumerate} 
\end{defi}

\begin{defi}\cite[Definition 1.1]{BCG2}
An unmixed ramification structure of type $(A_1,A_2)$ for $G$ is a disjoint pair $(T_1,T_2)$ of spherical systems of generators of $G,$ such that $T_1$ has type $A_1$ and $T_2$ has type $A_2$. We define $\mathcal{B}(G;A_1,A_2)$ to be the set of unmixed ramification structures of type $(A_1,A_2)$ for $G$.
\end{defi}

Whenever we have a surface $S=(C \times D)/G$ we have an unmixed ramification structure of type $(A_1,A_2)$. Moreover we see that an unmixed ramification structure gives a surface isogenous to a higher product by the following proposition. See \cite{BCG2} for more details.

\begin{prop}\cite[Proposition 2.5]{BCG2}
Let $G$ be a finite group and $A_1, A_2$ be two tuples of natural numbers. Then for any ramification structure $\mathcal{T} \in \mathcal{B}(G;A_1,A_2),$ there is a surface isogenous to a higher product of unmixed type with $G(S)=G$ and $\mathcal{T}(S)=\mathcal{T}$.
\end{prop}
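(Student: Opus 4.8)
The plan is to construct $C$ and $D$ separately as branched $G$-covers of $\mathbb{P}^1$ via the Riemann Existence Theorem, endow $C \times D$ with the diagonal $G$-action, and then show that the disjointness hypothesis in the ramification structure is precisely what forces this action to be free. Writing $\mathcal{T}=(T_1,T_2)$ with $T_1=[g_1,\dots,g_r]$ of type $A_1$ and $T_2=[h_1,\dots,h_s]$ of type $A_2$, the whole argument is a translation of the group-theoretic data into geometry and back.

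First I would realize the datum $T_1$ geometrically. Fix $r$ distinct points $p_1,\dots,p_r \in \mathbb{P}^1$ and a geometric basis $\gamma_1,\dots,\gamma_r$ of $\pi_1(\mathbb{P}^1 \setminus \{p_1,\dots,p_r\})$, which is subject to the single relation $\gamma_1 \cdots \gamma_r = 1$. Because $g_1,\dots,g_r$ generate $G$ and satisfy $g_1 \cdots g_r = 1$, the assignment $\gamma_i \mapsto g_i$ defines a surjective homomorphism $\pi_1(\mathbb{P}^1 \setminus \{p_1,\dots,p_r\}) \to G$. The associated connected unramified $G$-cover extends, by filling in the punctures, to a smooth projective curve $C$ with a faithful $G$-action and a quotient map $C \to C/G \cong \mathbb{P}^1$ branched exactly over $p_1,\dots,p_r$, where the points over $p_i$ have stabilizer conjugate to $\langle g_i \rangle$ of order $m_i=\ord(g_i)$. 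The Riemann--Hurwitz formula
$$2g(C)-2 = |G|\Big(-2 + \sum_{i=1}^r \big(1 - \tfrac{1}{m_i}\big)\Big)$$
then computes $g(C)$, and hyperbolicity of the type $A_1$ forces $g(C)\geq 2$. Repeating the construction with $T_2$ produces a curve $D$ of genus $\geq 2$ carrying a faithful $G$-action with $D/G \cong \mathbb{P}^1$.

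Next I would equip $C \times D$ with the diagonal action $g\cdot(c,d)=(g\cdot c, g\cdot d)$ and prove it is free. The key observation is that the set of elements of $G$ fixing some point of $C$ is exactly $\Sigma(T_1)$: a nontrivial $g$ fixes a point of $C$ if and only if that point is a ramification point, equivalently $g$ is conjugate to a nonzero power of some $g_i$. Likewise the elements fixing a point of $D$ form $\Sigma(T_2)$. Now a nontrivial $g$ fixes a point $(c,d)$ precisely when it fixes both some $c$ and some $d$, i.e. when $g \in \big(\Sigma(T_1) \cap \Sigma(T_2)\big) \setminus \{1\}$. The disjointness hypothesis $\Sigma(T_1) \cap \Sigma(T_2) = \{1\}$ therefore says exactly that no nontrivial element has a fixed point, so the diagonal action is free.

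With the action free, $S=(C\times D)/G$ is a smooth projective surface isogenous to a higher product of unmixed type; faithfulness of the $G$-actions on the factors gives $G(S)=G$, and reading the branch data of $C \to \mathbb{P}^1$ and $D \to \mathbb{P}^1$ off the construction recovers $\mathcal{T}(S)=(T_1,T_2)=\mathcal{T}$. I expect the main obstacle to be the passage from the purely group-theoretic spherical system to an actual branched cover: this is where the Riemann Existence Theorem does the essential work, and where one must take care that the chosen geometric basis yields exactly the prescribed monodromy $g_i$ — rather than a Hurwitz-equivalent system — so that $\mathcal{T}(S)$ equals $\mathcal{T}$ on the nose. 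By comparison, the Riemann--Hurwitz genus bound and the equivalence between disjointness and freeness are then routine.
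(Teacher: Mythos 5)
The paper offers no proof of this proposition --- it is quoted verbatim from Bauer--Catanese--Grunewald \cite[Proposition 2.5]{BCG2} --- and your argument is exactly the standard one given there: realize $T_1$ and $T_2$ as monodromy data of $G$-covers of $\mathbb{P}^1$ via the Riemann Existence Theorem, and observe that $\Sigma(T_i)$ is precisely the set of elements with a fixed point on the corresponding curve, so disjointness is equivalent to freeness of the diagonal action. The only loose thread is your appeal to ``hyperbolicity of the type'' for $g(C)\geq 2$, which is not among the stated hypotheses; it is in fact a consequence of disjointness, since if $C$ had genus $0$ every element of $G$ would fix a point of $C$ (forcing $\Sigma(T_1)=G$), and if $C$ had genus $1$ the fixed-point-free elements of $G$ would form a proper subgroup containing all of $\Sigma(T_2)$, contradicting that $T_2$ generates $G$.
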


\subsection{Automorphisms of curves}
We review several results about automorphisms of curves and their invariants. First, let us recall Lefschetz fixed point formula. 

\begin{thm}(Lefschetz Fixed Point Formula)\cite[Corollary 12.3]{Breuer}
Let $g \in G$ be a non-trivial automorphism of an algebraic curve $C$ of genus $g \geq 2,$ and let $\chi_{K_C}$ the character of the action of $G$ on $H^0(C,K_C)$. Then we have
$$ \chi_{K_C}(g) + \overline{\chi_{K_C}(g)} = 2 - |\Fix(g)|. $$
In particular, $\chi_{K_C}(g)=1-\frac{1}{2}|\Fix(g)|$ when all characters are real-valued.
\end{thm}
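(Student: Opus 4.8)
The plan is to derive this as a consequence of the topological Lefschetz fixed point formula applied to $g$ acting on $C$ viewed as a compact Riemann surface, combined with the Hodge decomposition of $H^1(C;\mathbb{C})$.

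First I would record the cohomological input. Since $C$ is a compact Riemann surface of genus $g$, we have $H^0(C;\mathbb{C}) \cong \mathbb{C} \cong H^2(C;\mathbb{C})$ and $\dim_{\mathbb{C}} H^1(C;\mathbb{C}) = 2g$. The automorphism $g$ acts trivially on $H^0$ (constants) and, being orientation-preserving, trivially on $H^2$ (the fundamental class), so $\operatorname{tr}(g^* \mid H^0) = \operatorname{tr}(g^* \mid H^2) = 1$. By Hodge theory the action of $g$ respects the decomposition $H^1(C;\mathbb{C}) = H^{1,0} \oplus H^{0,1}$ with $H^{1,0} = H^0(C,K_C)$ and $H^{0,1} = \overline{H^{1,0}}$. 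Since $g$ is a real map it commutes with complex conjugation, so an eigenvalue $\lambda$ of $g^*$ on $H^{1,0}$ corresponds to the eigenvalue $\overline{\lambda}$ on $H^{0,1}$; hence $\operatorname{tr}(g^* \mid H^1) = \chi_{K_C}(g) + \overline{\chi_{K_C}(g)}$.

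Next I would evaluate the Lefschetz number in two ways. On the cohomological side,
$$ L(g) = \sum_{i=0}^{2} (-1)^i \operatorname{tr}(g^* \mid H^i(C;\mathbb{C})) = 2 - \left( \chi_{K_C}(g) + \overline{\chi_{K_C}(g)} \right). $$
On the geometric side I would use that a nontrivial automorphism of finite order on a curve of genus $\geq 2$ has only finitely many fixed points, and that near each fixed point $p$ the action is holomorphically linearizable (a finite cyclic action on a disc), with $dg_p$ equal to multiplication by a root of unity $\zeta \neq 1$. Viewing this as a real endomorphism of the tangent plane, $\det(\mathrm{Id} - dg_p) = |1 - \zeta|^2 > 0$, so each fixed point contributes local index $+1$ and therefore $L(g) = |\Fix(g)|$. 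Equating the two expressions gives $|\Fix(g)| = 2 - (\chi_{K_C}(g) + \overline{\chi_{K_C}(g)})$, which is the asserted identity after rearrangement. The ``in particular'' clause is then immediate: if $\chi_{K_C}(g)$ is real then $\overline{\chi_{K_C}(g)} = \chi_{K_C}(g)$, and dividing by $2$ yields $\chi_{K_C}(g) = 1 - \tfrac{1}{2}|\Fix(g)|$.

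The step I expect to require the most care is the local fixed-point analysis: one must justify the linearizability of the finite-order action near each fixed point and confirm that the index is $+1$ (equivalently, that $g^*$ has no eigenvalue $1$ on the tangent space), since this is exactly what converts the analytic Lefschetz number into the naive fixed-point count. An alternative route would be the holomorphic Atiyah--Bott fixed point formula applied to $\mathcal{O}_C$, but there the local contributions of the form $1/(1 - \zeta^{-1})$ depend on the rotation numbers and do not collapse as cleanly, so the topological version is the more economical path to this particular real-valued identity.
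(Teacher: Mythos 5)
The paper does not actually prove this statement: it is quoted verbatim from Breuer \cite[Corollary 12.3]{Breuer} as a known tool, so there is no in-paper argument to compare against. Your proposal is a correct and complete proof of the quoted identity. The route is the standard one for the ``real part'' version of the trace formula: the cohomological Lefschetz number is $2-\bigl(\chi_{K_C}(g)+\overline{\chi_{K_C}(g)}\bigr)$ by the Hodge decomposition $H^1=H^{1,0}\oplus H^{0,1}$ together with $g^*$ commuting with conjugation, while the geometric Lefschetz number equals $|\Fix(g)|$ because each fixed point is nondegenerate of index $+1$. The two points you flag as delicate are exactly the right ones, and both go through: a finite-order holomorphic germ is linearizable (average $\sum_k \zeta^{-k}f^{\circ k}$), the rotation number $\zeta$ is a root of unity $\neq 1$ (else $g=\mathrm{id}$ by the identity theorem), and $\det(\mathrm{Id}-dg_p)=|1-\zeta|^2>0$ gives index $+1$. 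By contrast, the source the paper cites derives this corollary from the Eichler trace formula, i.e.\ the holomorphic (Atiyah--Bott) fixed point formula, where $\chi_{K_C}(g)=1+\sum_p \zeta_p/(1-\zeta_p)$ and the identity follows from $\frac{\zeta}{1-\zeta}+\frac{\bar\zeta}{1-\bar\zeta}=-1$; that route yields the finer complex-valued trace, whereas your topological argument is more elementary and suffices for the real-valued statement actually used in the paper (all characters of $G(32,27)$ being real). Either way the result stands; your proof is sound.
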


Beauville studied theta characteristics on curves with involution in \cite{Beauville}. His results are the main tools of our investigation of involution invariant line bundles. Let $C$ be a curve and $\sigma$ be the involution on $C$. Let $B$ be the quotient curve $C/ \langle \sigma \rangle,$ $\pi \colon C \to B$ be the quotient map and $R \subset C$ be the set of ramification points. This double covering corresponds to a line bundle $\rho$ on $B$ such that $\rho^2=\mathcal{O}_B(\pi_*R)$. See \cite{Beauville} for more details. Beauville obtained the following result which tells us which $\sigma$-invariant line bundles on $C$ comes from $B$.

\begin{lem} \cite{Beauville} \label{Beauville's lemma1}
Consider the map $\phi \colon \mathbb{Z}^R \to \Pic(C)$ which maps $r \in R$ to $\mathcal{O}_C(r)$. Its image lies in the subgroup $\Pic(C)^\sigma$ of $\sigma$-invariant line bundles. We get a short exact sequence $$ 0 \to \mathbb{Z}/2 \to (\mathbb{Z}/2)^R \to \Pic(C)^{\sigma}/\pi^*\Pic(B) \to 0, $$
and the kernel is generated by $(1,\cdots,1)$.
\end{lem}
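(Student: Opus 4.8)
The plan is to study $\sigma$-invariant line bundles through their $\sigma$-linearizations and the weights they induce at the fixed points $R=\Fix(\sigma)$, and then to invoke Kempf's descent lemma. First I would dispose of the easy assertions. Since every $r\in R$ is fixed by $\sigma$, we have $\sigma^*\mathcal{O}_C(r)=\mathcal{O}_C(\sigma^{-1}(r))=\mathcal{O}_C(r)$, so $\phi$ does land in $\Pic(C)^\sigma$. Because $\pi$ is ramified of index $2$ along $R$, the divisor identity $\pi^*\pi(r)=2r$ gives $\mathcal{O}_C(2r)=\pi^*\mathcal{O}_B(\pi(r))\in\pi^*\Pic(B)$; hence the composite $\mathbb{Z}^R\xrightarrow{\phi}\Pic(C)^\sigma\to\Pic(C)^\sigma/\pi^*\Pic(B)$ kills $2\,\mathbb{Z}^R$ and descends to the map $(\mathbb{Z}/2)^R\to\Pic(C)^\sigma/\pi^*\Pic(B)$ appearing in the sequence.

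Next I would identify the class of $(1,\dots,1)$, which maps to $\mathcal{O}_C(R)$, and claim $\mathcal{O}_C(R)\cong\pi^*\rho$. Indeed, realizing $C$ inside the total space of $\rho$ as $\{t^2=s\}$, where $s\in H^0(B,\rho^2)=H^0(B,\mathcal{O}_B(\pi_*R))$ cuts out the branch divisor, the tautological section $t$ of $\pi^*\rho$ on $C$ vanishes to order exactly $1$ along $R$; thus $\pi^*\rho\cong\mathcal{O}_C(R)\in\pi^*\Pic(B)$. Therefore $(1,\dots,1)$ lies in the kernel, which produces the injection $\mathbb{Z}/2\hookrightarrow(\mathbb{Z}/2)^R$, $1\mapsto(1,\dots,1)$, whose image lies in the kernel of the next map.

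The substantive content is surjectivity, which I would obtain from a linearization argument. Any $L\in\Pic(C)^\sigma$ admits a $\sigma$-linearization (the obstruction lies in $H^2(\mathbb{Z}/2,\mathbb{C}^*)=0$), unique up to a global sign since linearizations form a torsor under $H^1(\mathbb{Z}/2,\mathbb{C}^*)=\{\pm1\}$. Fixing one, the stabilizer $\langle\sigma\rangle$ acts on each fiber $L_r$ ($r\in R$) by a weight $w_r\in\{\pm1\}$; set $R'=\{r\in R:w_r=-1\}$. Since $\sigma$ acts on $T_rC$ by $-1$, the natural linearization of $\mathcal{O}_C(r)$ has weight $-1$ at $r$ and $+1$ at the remaining fixed points, so $L'=L\otimes\mathcal{O}_C\!\left(-\sum_{r\in R'}r\right)$ carries a linearization with all weights $+1$. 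By Kempf's descent lemma such an equivariant line bundle descends to a line bundle $M$ on $B$ with $L'=\pi^*M$, whence $L=\pi^*M\otimes\mathcal{O}_C(\sum_{r\in R'}r)$, proving surjectivity onto $\Pic(C)^\sigma/\pi^*\Pic(B)$.

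Finally, for exactness at the middle, suppose $\mathcal{O}_C(\sum_{r\in R'}r)=\pi^*N$ for some subset $R'\subseteq R$ and $N\in\Pic(B)$. I would compare the natural linearization on the left-hand side, which has weight $-1$ exactly on $R'$, with the pullback linearization on $\pi^*N$, which has all weights $+1$. Two linearizations of a single underlying line bundle differ by a class in $H^1(\mathbb{Z}/2,\mathbb{C}^*)=\{\pm1\}$, so their weight systems differ by one global sign; this forces either $R'=\emptyset$ or $R'=R$. Hence the kernel is exactly $\{(0,\dots,0),(1,\dots,1)\}=\mathbb{Z}/2\cdot(1,\dots,1)$. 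I expect this bookkeeping of weights under a change of linearization, together with the correct application of Kempf's descent lemma at the ramification points, to be the main obstacle; everything else is formal.
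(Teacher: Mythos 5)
The paper does not prove this statement: it is quoted verbatim from Beauville's paper \cite{Beauville} as a known result, so there is no in-paper argument to compare yours against. That said, your proof is correct and complete. The verification that $\phi$ lands in $\Pic(C)^\sigma$, that $2\mathbb{Z}^R$ dies modulo $\pi^*\Pic(B)$ via $\pi^*\pi(r)=2r$, and that $(1,\dots,1)\mapsto\mathcal{O}_C(R)\cong\pi^*\rho$ are all handled properly; the substantive steps --- surjectivity via choosing a $\sigma$-linearization (unobstructed since $H^2(\mathbb{Z}/2,\mathbb{C}^*)=0$), twisting by $\mathcal{O}_C(-\sum_{r\in R'}r)$ to make all fixed-point weights trivial, and applying Kempf descent; and exactness in the middle via the observation that two linearizations of one bundle differ by a single global sign, so the weight system is either all $+1$ or all $-1$ --- are exactly the right mechanism, and this is essentially how the result is established in the literature (Beauville's own argument likewise reduces to comparing invariant bundles with linearized ones and tracking the local $\pm1$ weights at the fixed points). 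The only implicit hypothesis worth flagging is $R\neq\emptyset$, without which the asserted kernel $\mathbb{Z}/2\cdot(1,\dots,1)$ makes no sense; in the intended setting ($\deg R=8$ in this paper) that is automatic.
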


And he obtains the following result which tells us how to compute the sheaf cohomology groups of invariant theta characteristics as follows.

\begin{prop} \cite{Beauville} \label{Beauville's lemma2}
Let $\kappa$ be a $\sigma$-invariant theta characteristic on $C$. Then
	\begin{enumerate}[(1)]
		\item $\kappa \cong \pi^*L(E)$ for some $L \in \Pic(B)$ and $E \subset R$ with $L^2 \cong K_B \otimes \rho(-\pi_*E)$. If another pair $(L',E')$ satisfies $\kappa \cong \pi^*L'(E'),$ we have $(L',E')=(L,E)$ or $(L',E')=(K_B \otimes L^{-1},R-E)$. 
		\item $h^0(\kappa) = h^0(L) + h^1(L),$ and the parity of $\kappa$ is equal to $\deg(L)-b+1~({\rm mod}~2)$ where $b$ is the genus of $B$.	
	\end{enumerate}
\end{prop}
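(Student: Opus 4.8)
The plan is to derive both parts from Lemma~\ref{Beauville's lemma1} together with the ramification formula for the double cover $\pi\colon C\to B$. Since $\kappa$ is a $\sigma$-invariant theta characteristic it lies in $\Pic(C)^\sigma$, so Lemma~\ref{Beauville's lemma1} immediately lets me write $\kappa\cong\pi^*L(E)$ for some $L\in\Pic(B)$ and some $E\subset R$; this gives the existence part of (1). To pin down the constraint on $L$ I would use three standard relations: the Hurwitz formula $K_C\cong\pi^*K_B\otimes\mathcal{O}_C(R)$, the identity $\pi^*(\pi_*r)=2r$ at a ramification point (whence $\mathcal{O}_C(2E)\cong\pi^*\mathcal{O}_B(\pi_*E)$), and $\mathcal{O}_C(R)\cong\pi^*\rho$. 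Squaring $\kappa\cong\pi^*L(E)$ and imposing $\kappa^2\cong K_C$ gives $\pi^*\bigl(L^2\otimes\mathcal{O}_B(\pi_*E)\bigr)\cong\pi^*(K_B\otimes\rho)$, and since $\pi^*$ is injective on $\Pic$ for the ramified cover, this yields $L^2\cong K_B\otimes\rho(-\pi_*E)$ as claimed.

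For the uniqueness clause, suppose $\pi^*L(E)\cong\pi^*L'(E')$ with $E,E'\subset R$. Then $\mathcal{O}_C(E)$ and $\mathcal{O}_C(E')$ represent the same class in $\Pic(C)^\sigma/\pi^*\Pic(B)$, so by the short exact sequence of Lemma~\ref{Beauville's lemma1}, whose kernel is generated by $(1,\dots,1)$, either $E'=E$ or $E'=R-E$. In the first case injectivity of $\pi^*$ forces $L'=L$, giving the pair $(L,E)$. In the second case I would compute $\pi^*(L\otimes L'^{-1})\cong\mathcal{O}_C(R-2E)\cong\pi^*\bigl(\rho\otimes\mathcal{O}_B(-\pi_*E)\bigr)$, so $L'\cong L\otimes\rho^{-1}\otimes\mathcal{O}_B(\pi_*E)$, which equals $K_B\otimes L^{-1}$ after substituting $L^2\cong K_B\otimes\rho(-\pi_*E)$. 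This produces exactly the two pairs in the statement.

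For part (2), I would pass to $B$ via $H^0(C,\kappa)\cong H^0(B,\pi_*\kappa)$ and decompose $\pi_*\kappa$ into its $\sigma$-eigensheaves. The projection formula gives $\pi_*\kappa\cong L\otimes\pi_*\mathcal{O}_C(E)$, and from $\pi_*\mathcal{O}_C\cong\mathcal{O}_B\oplus\rho^{-1}$ I expect the two eigen-line-bundles of $\pi_*\mathcal{O}_C(E)$ to have product $\rho^{-1}\otimes\mathcal{O}_B(\pi_*E)$ and to be $\mathcal{O}_B$ and $\rho^{-1}\otimes\mathcal{O}_B(\pi_*E)$; tensoring with $L$ and invoking the relation from part (1) identifies the two summands of $\pi_*\kappa$ as $L$ and $K_B\otimes L^{-1}$. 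Hence $h^0(\kappa)=h^0(L)+h^0(K_B\otimes L^{-1})=h^0(L)+h^1(L)$ by Serre duality on $B$. For the parity, since a theta characteristic is even or odd according to $h^0(\kappa)\bmod 2$, and $h^0(L)+h^1(L)\equiv\chi(L)=\deg L-b+1\pmod 2$ by Riemann--Roch on $B$ (genus $b$), the parity equals $\deg(L)-b+1\pmod 2$.

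The main obstacle I anticipate is the $\sigma$-linearization bookkeeping in part (2): deciding which eigen-summand of $\pi_*\mathcal{O}_C(E)$ is $\mathcal{O}_B$ and which is $\rho^{-1}\otimes\mathcal{O}_B(\pi_*E)$ requires tracking the sign of the linearization at each ramification point in $E$, which is genuinely delicate and is the step where a careful local computation near the branch divisor is needed. Fortunately the cohomology count $h^0(\kappa)=h^0(L)+h^1(L)$ is symmetric in the two summands, so the dimension formula and the resulting parity are insensitive to this labeling ambiguity; the same symmetry is what reconciles the two-fold ambiguity $(L,E)\leftrightarrow(K_B\otimes L^{-1},R-E)$ in part (1) with a single well-defined value of $h^0(\kappa)$.
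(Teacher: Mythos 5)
The paper offers no proof of this proposition---it is quoted directly from Beauville's article \cite{Beauville}---so there is nothing internal to compare against; your argument is a correct reconstruction of the standard proof: existence and the constraint $L^2\cong K_B\otimes\rho(-\pi_*E)$ follow from Lemma~\ref{Beauville's lemma1}, the relations $K_C\cong\pi^*(K_B\otimes\rho)$ and $\mathcal{O}_C(2E)\cong\pi^*\mathcal{O}_B(\pi_*E)$, and injectivity of $\pi^*$ on $\Pic(B)$ for a ramified double cover, while part (2) follows from the eigensheaf decomposition $\pi_*\kappa\cong L\oplus(K_B\otimes L^{-1})$ together with Serre duality and Riemann--Roch on $B$. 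The only point you leave slightly open---which eigen-summand of $\pi_*\mathcal{O}_C(E)$ is $\mathcal{O}_B$ and which is $\rho^{-1}(\pi_*E)$---is settled by the local computation that a $\sigma$-invariant function with at most a simple pole at a ramification point is actually regular, and in any case, as you correctly observe, the dimension count and parity are insensitive to this labeling.
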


Finally we recall the following result of Dolgachev which enables us to construct $G$-equivariant line bundles on $C \times D$ using $G$-invariant line bundles on $C$ and $D$.

\begin{prop} \cite{D2} \label{Dolgachev's proposition}
Let $X$ be a smooth projective variety and let $G$ be a finite group acting on $X$. There is a well-known exact sequence
\[ 0 \to \widehat{G} \to \Pic^G(X) \to \Pic(X)^G \to H^2(G,\mathbb{C}^*), \]
and the last homomorphism is surjective when $X$ is a curve.
\end{prop}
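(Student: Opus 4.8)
The plan is to assemble the four-term sequence from the standard cocycle description of equivariant line bundles and then to isolate the surjectivity as a Brauer-group vanishing that is special to curves. Recall that an object of $\Pic^G(X)$ is a line bundle $L$ together with a $G$-linearization, i.e.\ isomorphisms $\phi_g\colon g^*L \xrightarrow{\sim} L$ satisfying the cocycle condition $\phi_{gh} = \phi_g\circ g^*\phi_h$. First I would treat the left end: the forgetful map $\Pic^G(X) \to \Pic(X)^G$ sends $(L,\{\phi_g\})$ to $L$, whose isomorphism class is automatically $G$-invariant. A linearization of the trivial bundle $\mathcal{O}_X$ is a family of scalars (using $H^0(X,\mathcal{O}_X^*) = \mathbb{C}^*$, since $X$ is connected and projective) satisfying the cocycle condition, i.e.\ a character in $\widehat{G}$; comparing two linearizations of a fixed $L$ then shows that the kernel of the forgetful map is exactly $\widehat{G}$. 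This gives exactness at $\widehat{G}$ and at $\Pic^G(X)$.

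Next I would construct the obstruction map $\Pic(X)^G \to H^2(G,\mathbb{C}^*)$. Given $L$ with $g^*L \cong L$ for all $g$, choose arbitrary isomorphisms $\psi_g\colon g^*L \to L$; the failure of the cocycle condition, $c(g,h) = \psi_g\circ g^*\psi_h \circ \psi_{gh}^{-1} \in \mathrm{Aut}(L) = \mathbb{C}^*$, is a $2$-cocycle whose class $\mathrm{ob}(L)$ is independent of the choices. By construction $L$ lifts to $\Pic^G(X)$ iff $\mathrm{ob}(L) = 0$, which gives exactness at $\Pic(X)^G$ and completes the four-term sequence for arbitrary smooth projective $X$. (Equivalently, the whole sequence is the sequence of low-degree terms of the spectral sequence $H^p(G,H^q(X,\mathcal{O}_X^*)) \Rightarrow H^{p+q}_G(X,\mathcal{O}_X^*)$, using $H^0(X,\mathcal{O}_X^*)=\mathbb{C}^*$ and $H^1(X,\mathcal{O}_X^*)=\Pic(X)$.)

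For surjectivity I would reinterpret $\mathrm{ob}$ through two short exact sequences of $G$-modules, assuming as in our setting that $G$ acts faithfully (so $\mathbb{C}(X)/K$ is Galois with group $G$, where $K=\mathbb{C}(X)^G=\mathbb{C}(X/G)$): the sequence $0 \to \mathbb{C}^* \to \mathbb{C}(X)^* \to \mathrm{Prin}(X) \to 0$ of constants, rational functions and principal divisors, and the sequence $0 \to \mathrm{Prin}(X) \to \mathrm{Div}(X) \to \Pic(X) \to 0$. Hilbert's Theorem~90 gives $H^1(G,\mathbb{C}(X)^*)=0$, so from the first sequence $H^1(G,\mathrm{Prin}(X)) \cong \ker\bigl(H^2(G,\mathbb{C}^*) \to H^2(G,\mathbb{C}(X)^*)\bigr)$. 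The divisor group $\mathrm{Div}(X) = \bigoplus_{\text{orbits}} \mathbb{Z}[G/G_p]$ is a permutation module, so Shapiro's lemma gives $H^1(G,\mathrm{Div}(X)) = \bigoplus H^1(G_p,\mathbb{Z}) = 0$; hence the connecting map $\Pic(X)^G \to H^1(G,\mathrm{Prin}(X))$ of the second sequence is surjective. A diagram chase identifies the resulting composite $\Pic(X)^G \to H^1(G,\mathrm{Prin}(X)) \hookrightarrow H^2(G,\mathbb{C}^*)$ with $\mathrm{ob}$, so $\mathrm{im}(\mathrm{ob}) = \ker\bigl(H^2(G,\mathbb{C}^*)\to H^2(G,\mathbb{C}(X)^*)\bigr)$.

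The curve hypothesis enters only at the final step, and this is where I expect the real content to sit. Since $\mathbb{C}(X)/K$ is Galois with group $G$, one has $H^2(G,\mathbb{C}(X)^*) \cong \mathrm{Br}(\mathbb{C}(X)/K) \subseteq \mathrm{Br}(K)$. When $X$ is a curve, $K = \mathbb{C}(X/G)$ is the function field of a curve over the algebraically closed field $\mathbb{C}$, hence a $C_1$-field by Tsen's theorem, so $\mathrm{Br}(K)=0$ and therefore $H^2(G,\mathbb{C}(X)^*)=0$. Thus $\ker\bigl(H^2(G,\mathbb{C}^*)\to H^2(G,\mathbb{C}(X)^*)\bigr) = H^2(G,\mathbb{C}^*)$ and $\mathrm{ob}$ is surjective. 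The one step that deserves care is the diagram chase identifying the connecting homomorphism with the cocycle-theoretic obstruction map $\mathrm{ob}$; everything else is a formal consequence of Hilbert~90, Shapiro's lemma, and Tsen's theorem.
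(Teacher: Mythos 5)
The paper offers no proof of this proposition at all --- it is quoted verbatim from Dolgachev \cite{D2} as a known result --- so there is nothing internal to compare against; judged on its own terms, your argument is correct and is in substance the standard proof (and essentially Dolgachev's): the four-term sequence is the low-degree exact sequence of the spectral sequence $H^p(G,H^q(X,\mathcal{O}_X^*))\Rightarrow H^{p+q}_G(X,\mathcal{O}_X^*)$, unwound via cocycles exactly as you do, and the surjectivity for curves is a Tsen-theorem statement. Your reduction of the obstruction map to the two sequences $0\to\mathbb{C}^*\to\mathbb{C}(X)^*\to \mathrm{Prin}(X)\to 0$ and $0\to \mathrm{Prin}(X)\to \mathrm{Div}(X)\to \Pic(X)\to 0$, with Hilbert 90 and Shapiro's lemma killing $H^1(G,\mathbb{C}(X)^*)$ and $H^1(G,\mathrm{Div}(X))$, is a clean and fully elementary way to see that the image of the obstruction equals $\ker\bigl(H^2(G,\mathbb{C}^*)\to H^2(G,\mathbb{C}(X)^*)\bigr)=\ker\bigl(H^2(G,\mathbb{C}^*)\to \mathrm{Br}(\mathbb{C}(X)/K)\bigr)$, which is everything once $\mathrm{Br}(K)=0$. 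Two points are worth making explicit. First, you were right to insert the hypothesis that $G$ acts faithfully: as stated in the paper the surjectivity is literally false without it (for the trivial action of $G=\mathbb{Z}_2^2$ on an elliptic curve every invariant bundle is linearizable, yet $H^2(G,\mathbb{C}^*)=\mathbb{Z}_2$), and faithfulness is what makes $\mathbb{C}(X)/K$ Galois with group $G$; it holds in the paper's application. Second, the one step you flag as needing care --- identifying the composite connecting map with the cocycle-theoretic obstruction --- is genuinely the only place where conventions must be checked, and it is done by representing $L$ by a divisor $D$, writing $gD-D=\mathrm{div}(f_g)$, and observing that $c(g,h)=f_g\cdot{}^gf_h\cdot f_{gh}^{-1}\in\mathbb{C}^*$ is simultaneously the composite connecting cocycle and the failure of the induced isomorphisms $g^*L\to L$ to satisfy the cocycle condition; I see no gap.
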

\section{Derived categories of surfaces isogenous to a higher product with $G=G(32,27)$}
Let $S=(C \times D)/G$ be a surface isogenous to a higher product with $p_g=q=0$. It is easy to see that the maximal possible length of an exceptional collection is less than or equal to 4(see \cite{Coughlan, GS, Lee, Lee2, LS} for more details). In this section we construct exceptional collections of line bundles of maximal length 4 on $S=(C \times D)/G$ where $G=G(32,27)$.  The method of construction is similar to other cases. We can construct $G$-invariant ineffective theta characteristics on $C$. However there is no $G$-equivariant ineffective line bundle of degree 8 on $D$. In order to overcome this situation we need to show that there are enough characters of $G$ to construct exceptional collection of length 4. To do this we use computer algebra system Magma \cite{Magma} and group theoretic properties of $G$. See \ref{appendix A} and \ref{appendix B} for more details. In particular, see \ref{appendix A} for the character table of $G$.

\subsection{Equivariant geometry of $C$ and $D$}
Let $g_1, g_2, \cdots, g_5$ be generators of $G$ for the presentation of $G$ as in Appendix A. The unmixed ramification structure $\mathcal{T}=(T_1,T_2)$ corresponds to $S=(C \times D)/G$ is described in \cite{BCG2}. They computed that $\mathcal{T}=(T_1,T_2)$ is of type $([2,2,2,4],[2,2,4,4])$ on $G(32,27)$ which is equivalent to
\[
T_1=[g_1g_4g_5,g_2g_3g_4g_5,g_2g_4g_5,g_1g_3g_4],\;\;
T_2=[g_2g_3g_4,g_2,g_1g_2g_3g_5,g_1g_2]
\]
(\cite{BCG2}, see also \cite{BCG} and Appendix A).
We are going to construct the desired line bundles from the above unmixed ramification structure. We can compute the representation of $H^0(C,K_C)$ by the Lefschetz fixed point formula. 
For $T_1,$ the numbers of fixed points are given by Table \ref{fixedC}. (Note that $1$ fixes every point in $C$.)

\begin{table}[!ht]
\begin{center}
{\setlength\tabcolsep{1,1pt} 
\begin{tabular}{|c||c|c|c|c|c|c|c|c|c|c|c|c|c|c|} \hline
conjugacy class & $1$ & $g_5$ & $g_4$ & $g_4 g_5$ & $g_2 g_3 g_5$ & $g_2$ & $g_2 g_3$ & $g_3 g_4$ & $g_2 g_5$ & $g_3$  & $g_1$ & $g_1 g_2 g_3$ & $g_1 g_2$ & $g_1 g_3$ \\ \hline  
\#fixed points & $\infty$ & 8 & 0 & 0 & 0 & 0 & 8 & 0 & 8 & 0 & 4 & 0 & 0 & 4    \\ \hline
\end{tabular}}
\caption{The number of fixed points for ${T}_1$}
\label{fixedC}
\end{center}
\end{table}

Therefore we get the character $\chi_{K_C}$ of the action of $G$ on $H^0(C,K_C)$. The value of $\chi_{K_C}$ at the identity class is the genus $5$. At any non-trivial conjugacy class of $g \in G(32,27),$ the value of $\chi_{K_C}$ at $g$ is given by 
$$ \chi_{K_C}(g) = \frac{1}{2}(2 - |\Fix(g)|) $$
since every character of $G(32,27)$ is a real-valued function; See Remark \ref{allcharactersarereal}.
Thus the values of the character $\chi_{K_C}$ at the fourteen conjugacy classes ordered as above are as following Table \ref{charactertableC}.

\begin{table}[!ht]
\begin{center}
{\setlength\tabcolsep{2,1pt}  
\begin{tabular}{|c||c|c|c|c|c|c|c|c|c|c|c|c|c|c|} \hline
conjugacy class & $1$ & $g_5$ & $g_4$ & $g_4 g_5$ & $g_2 g_3 g_5$ & $g_2$ & $g_2 g_3$ & $g_3 g_4$ & $g_2 g_5$ & $g_3$  & $g_1$ & $g_1 g_2 g_3$ & $g_1 g_2$ & $g_1 g_3$ \\ \hline 
$ \chi_{K_C} $ & 5 & $-3$ & 1 & 1 & 1 & 1 & $-3$ & 1 & $-3$ & 1 & $-1$ & 1 & 1 & $-1$ \\ \hline
\end{tabular}}
\caption{The values of $\chi_{K_C}$}
\label{charactertableC}
\end{center}
\end{table}

Now from the character table of $G(32,27)$(see Appendix A), we can see that 
$\chi_{K_C} = \chi_7 + \chi_9 + \chi_{11}$.
Similarly, for $T_2$, the numbers of fixed points are given by Table \ref{fixedD}.

\begin{table}[!th]
\begin{center}
{\setlength\tabcolsep{1,1pt} \begin{tabular}{|c||c|c|c|c|c|c|c|c|c|c|c|c|c|c|} \hline
conjugacy class & $1$ & $g_5$ & $g_4$ & $g_4 g_5$ & $g_2 g_3 g_5$ & $g_2$ & $g_2 g_3$ & $g_3 g_4$ & $g_2 g_5$ & $g_3$  & $g_1$ & $g_1 g_2 g_3$ & $g_1 g_2$ & $g_1 g_3$ \\ \hline 
\#fixed points & $\infty$  & 0 & 8 & 8 & 8 & 8 & 0 & 0 & 0 & 0 & 0 & 4 & 4 & 0    \\ \hline
\end{tabular}}
\caption{The number of fixed points for ${T}_2$}
\label{fixedD}
\end{center}
\end{table}

The genus of $D$ is $9,$ so the character values of $\chi_{K_D}$ at the fourteen conjugacy classes ordered as above are as in Table \ref{charactertableD}.

\begin{table}[!ht]
\begin{center}
{\setlength\tabcolsep{2,1pt} \begin{tabular}{|c||c|c|c|c|c|c|c|c|c|c|c|c|c|c|} \hline
conjugacy class & $1$ & $g_5$ & $g_4$ & $g_4 g_5$ & $g_2 g_3 g_5$ & $g_2$ & $g_2 g_3$ & $g_3 g_4$ & $g_2 g_5$ & $g_3$  & $g_1$ & $g_1 g_2 g_3$ & $g_1 g_2$ & $g_1 g_3$ \\ \hline 
$ \chi_{K_D} $ & 9 & 1 & $-3$ & $-3$ & $-3$ & $-3$ & 1 & 1 & 1 & 1 & 1 & $-1$ & $-1$ & 1 \\\hline
\end{tabular} }
\caption{The values of $\chi_{K_D}$}
\label{charactertableD}
\end{center}
\end{table}

We find $\chi_{K_D} = \chi_4+ \chi_{10} + \chi_{12} + \chi_{13} + \chi_{14}$ from the character table again.

\subsection{Constructing line bundles on $C$}
Consider the normal subgroup $\langle g_5 \rangle \unlhd G(32,27)$ and the quotient map $\pi\colon C \longrightarrow B$ where $B$ is a quotient curve $B:=C/\langle g_5 \rangle$. Then by the unmixed ramification structure and the Riemann-Hurwitz formula, we see that $B$ is an elliptic curve and $\deg R=8$ where $R$ denotes a ramification divisor of $\pi$.
Moreover, since $G(32,27)$ act on $C,$ $\overline{G}:=G(32,27)/\langle g_5 \rangle$ also acts on $B$.
At first, we will show that there exists a non-trivial $\overline{G}$-invariant theta characteristic of $B$. To see this, note that there are 4 theta characteristics $\eta_0:=\mathcal{O}_B,$ $\eta_1,$ $\eta_2$ and $\eta_3$ on $B$.
Now consider the $\overline{G}$-orbit $\overline{G}\eta_i$ of $\eta_i$.
Since $\overline{G}$ acts on the set of all theta characteristics and $\eta_0=\mathcal{O}_B$ is fixed under the $\overline{G}$-action, we have 3 possibilities: ($\{i,j,k\}=\{1,2,3\}$)
\begin{enumerate}
	\item[(1)] $\overline{G}\eta_i=\{\eta_i\}$: This means $\eta_i$ is a $\overline{G}$-invariant, so we are done.	
	\item[(2)] $\overline{G}\eta_i=\{\eta_i, \eta_j\}$: 
			Take $\eta_k \notin \{\eta_i, \eta_j\}$. Then we have 
			$\overline{G}\eta_k=\{\eta_k\}$.
	\item[(3)] $\overline{G}\eta_i=\{\eta_i, \eta_j, \eta_k\}$: 
			By the orbit-stabilizer theorem, we have
			\[
			\lvert\overline{G}\eta_i\rvert
			\lvert\textrm{Stab}_{\overline{G}}(\eta_i)\rvert
			=\lvert\overline{G}\rvert
			\]
			where $\textrm{Stab}_{\overline{G}}(\eta_i)$ denotes the stabilizer of $\eta_i$ under the $\overline{G}$-action.
			However, since $3=\lvert\overline{G}\eta_i\rvert\nmid \lvert\overline{G}\rvert=16,$ this is impossible.
\end{enumerate}
Thus, there is a $\overline{G}$-invariant theta characteristic $\eta$ of $B$. We see that  $h^0(B,\eta)=0$ since there is no global section for any non-trivial line bundle of degree 0 of $B,$ and $h^1(B, \eta)$ also vanishes by the Riemann-Roch theorem. With the existence of such $\eta,$ we can prove the following lemma:

\begin{lem} 
$C$ has a $G(32,27)$-invariant ineffective theta characteristic $\mathcal{L}$.
\end{lem}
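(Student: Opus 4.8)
The plan is to build $\mathcal L$ on $C$ from the double cover $\pi\colon C\to B$ by feeding the $\overline G$-invariant theta characteristic $\eta$ just produced on $B$ into Beauville's description of $\sigma$-invariant theta characteristics (Proposition \ref{Beauville's lemma2}), arranging every choice to be $\overline G$-invariant so that the outcome is invariant under all of $G=G(32,27)$, not merely under $\sigma=g_5$. First I would record the geometry of $\pi$. Here $\sigma=g_5$ is an involution whose fixed locus $R=\Fix(g_5)$ is exactly the ramification, of cardinality $8$ by Table \ref{fixedC}; $B$ is elliptic, so $K_B\cong\mathcal O_B$; and the double-cover line bundle $\rho$ satisfies $\rho^2\cong\mathcal O_B(\pi_*R)$, whence $\deg\rho=4$ and $\pi^*\rho\cong\mathcal O_C(R)\cong K_C$. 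Since $\langle g_5\rangle\unlhd G$, the $G$-action descends to $B$, giving a $\overline G$-action that permutes $R$ and fixes the class of $\rho$ (as $\rho^{-1}$ is the canonical summand of the $\overline G$-sheaf $\pi_*\mathcal O_C$).

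Next I would invoke Proposition \ref{Beauville's lemma2}: every $\sigma$-invariant theta characteristic has the form $\kappa\cong\pi^*L(E)$ with $L^2\cong K_B\otimes\rho(-\pi_*E)=\rho(-\pi_*E)$, and $h^0(\kappa)=h^0(L)+h^1(L)$. On the elliptic curve $B$ this cohomology vanishes precisely when $\deg L=0$ and $L\not\cong\mathcal O_B$; comparing degrees in $L^2\cong\rho(-\pi_*E)$ then forces $|E|=4$. So it suffices to exhibit a $4$-element subset $E\subset R$ together with a nontrivial degree-$0$ square root $L$ of $\rho(-\pi_*E)$, both chosen so that $\kappa=\pi^*L(E)$ is $G$-invariant. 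The uniqueness clause of Proposition \ref{Beauville's lemma2} makes invariance explicit: for $h\in G$ one has $h^*\kappa\cong\pi^*(\bar h^*L)\otimes\mathcal O_C(hE)$, so $h^*\kappa\cong\kappa$ amounts to the pair $(\bar h^*L,hE)$ being equal to $(L,E)$ or to the swapped pair $(L^{-1},R-E)$.

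This is where $\eta$ enters. Taking $L=\eta$, which is $\overline G$-invariant, nontrivial, and $2$-torsion, both branches of the dichotomy automatically preserve $L$, so $\kappa$ becomes $G$-invariant as soon as $\overline G$ preserves the partition $\{E,R-E\}$ of the eight branch points; moreover $L^2\cong\mathcal O_B$ turns the theta-characteristic condition into the single arithmetic requirement $\mathcal O_B(\pi_*E)\cong\rho$. I would produce such an $E$ using the explicit action of $\overline G$ on the eight branch points computed in Magma (see Appendix \ref{appendix A} and \ref{appendix B}): first locate a $\overline G$-invariant splitting of $R$ into two $4$-element blocks, then verify that one block sums to $\rho$ in $\Pic(B)$, the other doing so automatically since $\mathcal O_B(\pi_*R)\cong\rho^2$. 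Granting this, $\mathcal L:=\pi^*\eta\otimes\mathcal O_C(E)$ is an ineffective theta characteristic with $h^*\mathcal L\cong\mathcal L$ for every $h\in G$, which is the desired invariant bundle; should a genuine linearization be needed later, Proposition \ref{Dolgachev's proposition} supplies it.

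The main obstacle is exactly this last step: securing a single $E$ that meets both the combinatorial condition (a $\overline G$-invariant $4$--$4$ block system on the branch points) and the arithmetic condition $\mathcal O_B(\pi_*E)\cong\rho$. Unlike the existence of $\eta$, which followed from a clean divisibility argument on orbit sizes, controlling $E$ appears to require the explicit branch-point combinatorics of the $\overline G$-action together with a direct check of the linear equivalence on $B$; promoting that equivalence on $B$ to the invariance of $\mathcal O_C(E)$ on $C$ is the delicate point, and I expect it to rely on the computations recorded in the appendices.
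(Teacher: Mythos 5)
Your overall blueprint matches the paper's: both take $\mathcal L=\pi^*\eta\otimes(\text{a }G\text{-invariant degree-}4\text{ bundle supported on }R)$ and then read off $h^0(\mathcal L)=h^0(\eta)+h^1(\eta)=0$ from Proposition \ref{Beauville's lemma2}. But the step you yourself flag as ``the main obstacle'' --- producing a $G$-invariant square root of $\mathcal O_C(R)$, i.e.\ a $4$-element block $E\subset R$ with $\{E,R-E\}$ stable under $\overline G$ \emph{and} $\mathcal O_B(\pi_*E)\cong\rho$ --- is exactly the content of the lemma, and you leave it unproved. Deferring it to ``the explicit action of $\overline G$ on the eight branch points computed in Magma (see Appendix A and B)'' does not close the gap: the appendices contain only the character table, fixed-point counts, and the character-theoretic search for $\chi$ in Theorem \ref{maintheorem}; there is no computation there of a block system on the branch points or of linear equivalences in $\Pic(B)$. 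In particular the arithmetic condition $\mathcal O_B(\pi_*E)\cong\rho$ is a genuine constraint (a $\overline G$-stable block could a priori differ from $\rho$ by a nontrivial $2$-torsion class), so asserting that a suitable $E$ can be ``located'' is not a proof.

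The paper closes this gap with a short group-theoretic device that avoids all branch-point combinatorics on $B$: take the order-$4$ normal subgroup $H=\langle g_2g_5,g_4\rangle$, for which $C/H\cong\mathbb P^1$ and $H$ acts freely on $R$. Then $R$ is the disjoint union of two \emph{full} unramified fibers of $\pi_H\colon C\to\mathbb P^1$, so $\mathcal O_C(R)\cong\pi_H^*\mathcal O_{\mathbb P^1}(2)$ and $\pi_H^*\mathcal O_{\mathbb P^1}(1)$ is the desired square root; its $G$-invariance is automatic because $\Pic(\mathbb P^1)\cong\mathbb Z$ is fixed by every automorphism of $\mathbb P^1$, and $\mathcal L:=\pi^*\eta\otimes\pi_H^*\mathcal O_{\mathbb P^1}(1)$ satisfies $\mathcal L^2\cong\pi^*K_B\otimes\mathcal O_C(R)\cong K_C$ by direct computation, with no need to verify $\mathcal O_B(\pi_*E)\cong\rho$ separately. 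If you want to complete your argument along your own lines, you would need to supply an analogue of this construction (or an explicit verification of both conditions on $E$); as written, the proposal reduces the lemma to an unproved claim rather than proving it.
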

\begin{proof}
	Let $R$ be a ramification divisor of $\pi\colon C\longrightarrow B$ of degree 8 as above and consider a normal subgroup $H:=\langle g_2g_5, g_4 \rangle \unlhd G(32,27)$ of order 4.
	Then by the unmixed ramification structure and the Riemann-Hurwitz formula, $C/H\cong \mathbb{P}^1$ and since $g_2$ and $g_4$ freely act on $C,$ the $H$-action on $R$ is free. 
	Thus, $\mathcal{O}_C(R)=\pi_H^*\mathcal{O}_{\mathbb{P}^1}(1)^{\otimes 2}$ where $\pi_H\colon C\longrightarrow \mathbb{P}^1$ is the quotient map induced by $H$. Because $\mathcal{O}_{\mathbb{P}^1}(1)$ is $G(32,27)/H$-invariant, $\pi_H^*\mathcal{O}_{\mathbb{P}^1}(1)$ is $G(32,27)$-invariant. Therefore, $\mathcal{L}:=\pi^{*} \eta \otimes \pi_H^*\mathcal{O}_{\mathbb{P}^1}(1)$ is a $G(32,27)$-invariant line bundle and since
  	\begin{align*}
  	\mathcal{L}^{\otimes 2}
  	&= (\pi^{*} \eta \otimes \pi_H^*\mathcal{O}_{\mathbb{P}^1}(1))^{\otimes 2}
  	=\pi^{*}\eta^{\otimes2} \otimes \pi_H^*\mathcal{O}_{\mathbb{P}^1}(1)^{\otimes 2}
  	\\
  	&=\pi^{*}(K_B)\otimes\mathcal{O}_C(R)=K_C ,
  	\end{align*}
$\mathcal{L}$ is a theta characteristic. Moreover we have $h^0(C, \mathcal{L})=2h^0(B,\eta)=0$ by Proposition \ref{Beauville's lemma2}.
\end{proof}

\subsection{Constructing line bundles on $D$}
We begin with the following lemma which will be useful to construct some line bundle on $D$.
\begin{lem}\label{linequivofD}
Let $A_1, A_2, A_3, A_4$ be the set-theoretic fibers consisting of the ramification points of the map $\pi_D\colon D\to D/G(32,27)$ whose stabilizer group is $\langle g_2g_3g_4 \rangle$, $\langle g_2\rangle$, $\langle g_1g_2g_3g_5 \rangle$ and $\langle g_1g_2 \rangle$ respectively. Then the following linear equivalence relations hold:
	\begin{enumerate}
		\item[(1)] $A_1 \sim A_2 \sim 2A_3 \sim 2A_4$.
		\item[(2)] $A_3 \nsim A_4$.
	\end{enumerate}
\end{lem}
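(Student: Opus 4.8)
The plan is to translate both assertions into statements about the pullback map $\pi_D^{*}$ and the torsion of $\mathrm{Pic}^{0}(D)$, and then to resolve the remaining torsion ambiguity using the characters of $G(32,27)$. Write $\pi_D\colon D\to \mathbb{P}^{1}=D/G(32,27)$ for the quotient (degree $32$), and let $q_1,q_2,q_3,q_4\in\mathbb{P}^1$ be the four branch points, with local monodromy (stabilizer) generated by $s_1=g_2g_3g_4$, $s_2=g_2$ (order $2$) and $s_3=g_1g_2g_3g_5$, $s_4=g_1g_2$ (order $4$). By the definition of the $A_i$ one has $\pi_D^{*}q_i=m_iA_i$ with $m_1=m_2=2$ and $m_3=m_4=4$. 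Since any two points of $\mathbb{P}^1$ are linearly equivalent, this already gives the coarse relations $2A_1\sim2A_2\sim4A_3\sim4A_4$; equivalently $A_1-A_2$, $A_1-2A_3$, $A_1-2A_4$ and $A_3-A_4$ are torsion classes of degree $0$, and the whole content of the lemma is to pin these classes down.

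For part (1) I would sharpen the coarse relations by passing to the intermediate double covers attached to the order-$2$ characters of $G(32,27)$. For $\chi\in\widehat{G}$ with $\chi^2=1$ and kernel $N_\chi$ of index $2$, set $E_\chi:=D/N_\chi$; then $E_\chi\to\mathbb{P}^1$ is the double cover branched exactly over $\{q_i:\chi(s_i)=-1\}$, an even set because $s_1s_2s_3s_4=1$ forces $\prod_i\chi(s_i)=1$. A short orbit count shows that over a branch point $q_i$ the full orbit $A_i$ is a single $N_\chi$-orbit mapping to one ramification point $\bar P_i\in E_\chi$ with $\pi_{N_\chi}^{*}\bar P_i=(m_i/2)\,A_i$. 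Using the character table in Appendix~A (where $\widehat{G}\cong(\mathbb{Z}/2)^3$), I would exhibit three such characters whose branch loci are $\{q_1,q_2\}$, $\{q_1,q_3\}$ and $\{q_1,q_4\}$; each has exactly two branch points, so each $E_\chi\cong\mathbb{P}^1$ and the points $\bar P_i$ are linearly equivalent on it. Pulling back $\bar P_1\sim\bar P_2$, $\bar P_1\sim\bar P_3$ and $\bar P_1\sim\bar P_4$ then yields $A_1\sim A_2$, $A_1\sim 2A_3$ and $A_1\sim 2A_4$, which is exactly (1).

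For part (2) the decisive input is that the characters of $G(32,27)$ are real-valued (Remark~\ref{allcharactersarereal}). I would argue by contradiction: suppose $A_3\sim A_4$ and choose $f\in\mathbb{C}(D)^{*}$ with $\mathrm{div}(f)=A_4-A_3$. Since $A_3$ and $A_4$ are $G$-invariant divisors, $\mathrm{div}(f)$ is $G$-invariant, so $g^{*}f=\lambda(g)f$ for a character $\lambda\in\widehat{G}$. Now look locally at a point $P_3\in A_3$ whose stabilizer is $\langle s_3\rangle$: as $s_3$ has order $4$ it acts on a local coordinate $t$ at $P_3$ by a primitive fourth root of unity $\zeta$, and since $\mathrm{ord}_{P_3}(f)=-1$ one writes $f=t^{-1}u$ with $u$ a unit; comparing leading terms in $s_3^{*}f=\lambda(s_3)f$ gives $\lambda(s_3)=\zeta^{-1}=\pm i$. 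This is impossible, because a real-valued one-dimensional character takes only the values $\pm1$. Hence no such $f$ exists and $A_3\nsim A_4$.

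The main obstacle is part (2): the quotient geometry sees the classes $A_i$ only up to $2$-torsion, and the genus-zero intermediate covers that suffice for (1) can never detect a nontrivial torsion class. What rescues the argument is the rigidity of the order-$4$ stabilizers combined with the reality of the character table: trivializing $A_3-A_4$ would force a character to take a genuinely complex value on $g_1g_2g_3g_5$. For completeness I would also verify, from Appendix~A, the existence of the three order-$2$ characters used in (1) — equivalently, the solvability over $\mathbb{F}_2$ of the corresponding linear system for the images $\bar s_i\in G^{\mathrm{ab}}\cong(\mathbb{Z}/2)^3$ — since the truth of (1) is in fact equivalent to their existence.
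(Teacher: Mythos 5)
Your proof is correct, and it departs from the paper's most substantially in part (2). For part (1) both arguments pull back linear equivalences of points from an intermediate quotient of $D$ isomorphic to $\mathbb{P}^1$: the paper takes the order-$8$ subgroups $H_1=\langle g_1g_2g_4,g_4,g_5\rangle$ and $H_2=\langle g_1g_2g_3g_4g_5,g_4,g_5\rangle$ and checks $D/H_i\cong\mathbb{P}^1$ by Riemann--Hurwitz, whereas you take index-$2$ kernels $N_\chi$ and detect $D/N_\chi\cong\mathbb{P}^1$ from the branch locus having exactly two points. Your version reduces all of (1) to a linear system over $\mathbb{F}_2$, and the verification you deferred does go through: since $\bar s_1=\bar g_2\bar g_3$, $\bar s_2=\bar g_2$, $\bar s_3=\bar g_1\bar g_2\bar g_3$, $\bar s_4=\bar g_1\bar g_2$ in $G^{\mathrm{ab}}\cong(\mathbb{Z}/2)^3$, the triples $(\chi(g_1),\chi(g_2),\chi(g_3))=(-1,-1,1)$, $(1,1,-1)$, $(-1,1,-1)$ give branch loci $\{q_1,q_2\}$, $\{q_1,q_3\}$, $\{q_1,q_4\}$ respectively, and the corresponding kernels appear in the list of normal subgroups in Appendix A. For part (2) the paper argues via Beauville's Lemma~\ref{Beauville's lemma1}, showing $A_3$ is a pullback from $D/H_3$ with $H_3=\langle g_2,g_4\rangle$ while $A_4$ cannot be because it does not exhaust the ramification of $D\to D/H_3$. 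Your argument is genuinely different and more elementary: a rational function with divisor $A_4-A_3$ must transform by a one-dimensional character $\lambda$, and reading off $\lambda(s_3)$ from the faithful order-$4$ action on the cotangent line at a fixed point of $s_3$ where $f$ has a simple pole forces $\lambda(s_3)=\pm i$, contradicting the fact that all characters of $G(32,27)$ are real-valued (Remark~\ref{allcharactersarereal}), equivalently that $G^{\mathrm{ab}}$ has exponent $2$. This buys a self-contained proof of (2) that bypasses the invariant-Picard machinery and isolates the order-$4$ stabilizers as the precise obstruction; the paper's route buys consistency with the toolkit (Beauville's exact sequence) it already set up for the curve $C$.
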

\begin{proof}
Note that the orders of the stabilizer groups $\langle g_2g_3g_4 \rangle$, $\langle g_2\rangle$, $\langle g_1g_2g_3g_5 \rangle$ and $\langle g_1g_2 \rangle$ are 2, 2, 4 and 4 respectively; see Remark \ref{G32isom}.
	\begin{enumerate}[(1)]
		\item Consider the subgroup $H_1=\langle g_1g_2g_4,g_4,g_5 \rangle$ of $G(32,27)$ of order 8 and the quotient curve $D/{H_1}$. 
Since $H_1$ is disjoint to conjugacy classes of stabilizer groups $\langle g_2g_3g_4 \rangle$, $\langle g_2\rangle$ of $A_1$ and $A_2$, respectively, $H_1$ acts freely on both $A_1$ and $A_2$. Similarly, since $(g_1g_2g_3g_5)^2=g_4g_5\in H_1$ and same holds for any other representatives of its conjugacy classes, $H_1$ acts on $A_3$ with stabilizer group $\mathbb{Z}_2$. Finally, since $g_1g_2$ and other representatives of its conjugacy classes are in $H_1$, $H_1$ acts on $A_4$ with stabilizer group $\mathbb{Z}_4$. Then, by the Riemann-Hurwitz formula, we have $D/{H_1} \cong \mathbb{P}^1$ and hence, we get $A_1 \sim A_2 \sim 2A_3$. Next, consider $H_2 = \langle g_1g_2g_3g_4g_5,g_4,g_5 \rangle$. By the similar argument for $H_2$, we can get $A_1 \sim A_2 \sim 2A_4$.
		\item Now consider the $H_3=\langle g_2, g_4 \rangle$-action on $D$. Then similar argument as above shows that $A_3$ contains no ramification point of $\pi_3\colon D\to D/H_3$ and $A_4$ contains some ramification points of $\pi_3$. Moreover, we can see that $A_3$ is given by a pull-back of some divisor of $D/H_3$. However, since $A_2$ also contains ramification points, $A_4$ does not contains all ramification points and hence, it cannot be a pull-back of some divisor of $D/H_3$ by Lemma \ref{Beauville's lemma1}. Therefore, $A_3 \nsim A_4$. \qedhere
	\end{enumerate}
\end{proof}


\begin{rmk}
From the above Lemma, we see that $\mathcal{O}(A_1-A_3)$, $\mathcal{O}(A_1-A_4)$, $\mathcal{O}(A_2-A_3)$, $\mathcal{O}(A_2-A_4)$ are effective line bundles.
\end{rmk}

From the above computations we also have the following Lemma.
\begin{lem}\label{h0ofM}
As a $G$-module, $H^0(D,\mathcal{O}(A_3)) \cong \chi_1 \oplus \alpha$ where $\alpha$ is a 2-dimensional irreducible representation of $G$.
\end{lem}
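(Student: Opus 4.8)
Let me analyze what's being claimed. We have $A_3$, which is the set-theoretic fiber of ramification points over the branch point of $\pi_D: D \to D/G$ whose stabilizer is $\langle g_1g_2g_3g_5\rangle$, a cyclic group of order 4. The divisor $\mathcal{O}(A_3)$ should be $G$-equivariant since $G$ permutes the points in a fiber. The claim is that $H^0(D, \mathcal{O}(A_3)) \cong \chi_1 \oplus \alpha$ as a $G$-module, where $\chi_1$ is the trivial character and $\alpha$ is a 2-dimensional irreducible representation.

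The plan is to first compute the dimension of $H^0(D, \mathcal{O}(A_3))$, and then identify the $G$-module structure using character theory. First I would determine $\deg A_3$: since the stabilizer has order 4, the fiber $A_3$ consists of $|G|/4 = 32/4 = 8$ points, so $\deg \mathcal{O}(A_3) = 8$. With $D$ of genus $9$, Riemann-Roch gives $h^0 - h^1 = \deg - g + 1 = 8 - 9 + 1 = 0$. From Lemma \ref{linequivofD} we know $2A_3 \sim A_1 \sim A_2$, and the remark notes that $\mathcal{O}(A_1 - A_3)$ is effective, so $h^0(\mathcal{O}(A_3)) \geq 1$; moreover the trivial character $\chi_1$ appears as the constants are $G$-invariant sections (the global function $1$ lands in the trivial summand). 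The main computational task is thus to pin down $h^0$ exactly and show it equals $3$, matching $\dim(\chi_1 \oplus \alpha) = 1 + 2 = 3$.

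To compute the $G$-module structure I would use the holomorphic Lefschetz fixed point formula (equivariant Riemann-Roch) for the line bundle $\mathcal{O}(A_3)$, computing the character $\chi_{H^0(\mathcal{O}(A_3))}(g)$ at each of the fourteen conjugacy classes, exactly as was done for $H^0(C, K_C)$ and $H^0(D, K_D)$. Concretely, for each nontrivial $g$ one evaluates the trace of the $g$-action on $H^0 - H^1$ via the localization contributions at the fixed points of $g$ on $D$, using the fixed-point data of Table \ref{fixedD} together with the local weights of $g$ acting on $\mathcal{O}(A_3)$ at each fixed point. Since $h^1(\mathcal{O}(A_3)) = h^0$ by the degree computation above, one must account for the $H^1$ contribution; by Serre duality $H^1(\mathcal{O}(A_3))^* \cong H^0(K_D \otimes \mathcal{O}(-A_3))$, and these character values can be matched against the already-known decomposition $\chi_{K_D} = \chi_4 + \chi_{10} + \chi_{12} + \chi_{13} + \chi_{14}$. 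Once the virtual character $\chi_{H^0} - \chi_{H^1}$ is known and $h^0$ is determined, comparing against the character table in Appendix A isolates $\chi_1 \oplus \alpha$ as the only decomposition consistent with the dimension and the character values.

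The hard part will be correctly bookkeeping the local weights at the fixed points and disentangling the $H^0$ and $H^1$ contributions, since $H^1$ does not vanish here; a clean route is to observe that the equivariant structure on $\mathcal{O}(A_3)$ itself must be chosen (via Proposition \ref{Dolgachev's proposition}) and that this choice is fixed by demanding that the canonical section cutting out $A_3$ be $G$-invariant, forcing the trivial summand $\chi_1$. Granting $h^0 = 3$, the decomposition is then forced: the quotient $H^0/\chi_1$ is a $2$-dimensional $G$-representation, and checking its character against the table shows it is the irreducible $\alpha$ rather than a sum of two one-dimensional characters, which completes the identification.
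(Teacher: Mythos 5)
Your overall framing is fine (the trivial summand from the canonical section cutting out $A_3$, the target dimension $h^0=3$), but the core of your argument has a genuine gap: you never actually establish $h^0(D,\mathcal{O}(A_3))=3$, and the tool you propose cannot do it. The holomorphic Lefschetz fixed point formula computes only the \emph{virtual} character $\chi_{H^0}-\chi_{H^1}$; since $\deg A_3=8=g-1$ gives $h^0=h^1$, this virtual character vanishes at the identity and, worse, by Serre duality $H^1(\mathcal{O}(A_3))^*\cong H^0(\mathcal{O}(A_4))\otimes\chi$ has exactly the same qualitative shape as $H^0(\mathcal{O}(A_3))$, so the difference of the two characters does not let you disentangle them. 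You flag this as ``the hard part'' and then proceed by ``granting $h^0=3$,'' which is precisely the point that needs proof. Similarly, your final step (``checking its character against the table shows it is the irreducible $\alpha$'') presupposes knowledge of the actual character of $H^0$, which your method does not produce.

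The paper closes both gaps by non-cohomological means. For the upper bound it uses that $D$ is not hyperelliptic, so Clifford's theorem gives $h^0(\mathcal{O}(A_3))\le 4$; for the lower bound it uses Lemma \ref{linequivofD}, which produces an effective divisor linearly equivalent to $A_3$ but distinct from it (e.g.\ via $2A_3\sim A_1$), so $h^0\ge 2$. To rule out extra one-dimensional summands it observes that a $1$-dimensional subrepresentation corresponds to a $G$-invariant effective divisor in $|A_3|$, and $A_3$ is the unique such divisor; combined with the fact that $G(32,27)$ has no irreducible representations of dimension $>2$, the only decomposition compatible with $2\le h^0\le 4$ and a single $1$-dimensional summand is $\chi_1\oplus\alpha$ with $\alpha$ irreducible of dimension $2$, which simultaneously forces $h^0=3$. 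You would need to supply arguments of this kind (or some other genuine determination of $h^0$ and of the absence of further semi-invariant sections) for your proof to go through.
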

\begin{proof}
First, we claim that $\chi_1$ is the unique 1-dimensional subrepresentation of $H^0(D,\mathcal{O}(A_3))$. If there is another 1-dimensional subrepresentation of $H^0(D,\mathcal{O}(A_3)),$ then there should be a $G$-invariant effective divisor which is linearly equivalent to $A_3$. However $A_3$ is the unique $G$-invariant effective divisor of degree 8 linearly equivalent to itself. Therefore there is no 1-dimensional subrepresentation of $H^0(D,\mathcal{O}(A_3))$ other than $\chi_1$. 
Note that $D$ is not a hyperelliptic curve (cf. \cite{Pardini}). From the Clifford theorem we see that $1 \leq h^0(D,\mathcal{O}(A_3)) \leq 4$. Moreover from the analysis of the previous Lemma, we see that there is an effective divisor of degree 8 not equal to $A_3$ but linearly equivalent to $A_3$. Therefore $h^0(D,\mathcal{O}(A_3)) > 1$ and since there is no $n$-dimensional irreducible representation of $G$ for $n>2$(cf. Remark \ref{allcharactersarereal}), we get $H^0(D,\mathcal{O}(A_3))=\chi_1 \oplus \alpha$ where $\alpha$ is a 2-dimensional irreducible representation.
\end{proof}

Now we express $K_D$ in terms of $A_3$ and $A_4$.

\begin{lem}\label{KD=A3+A4}
$K_D \cong \mathcal{O}(A_3+A_4)$.
\end{lem}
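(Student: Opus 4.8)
The plan is to compute $K_D$ directly from the Galois cover $\pi_D\colon D \to D/G(32,27) \cong \mathbb{P}^1$ via the Riemann--Hurwitz formula, and then to simplify the resulting divisor class using the linear equivalences already established in Lemma \ref{linequivofD}. First I would record the ramification data. Since the stabilizer groups of $A_1,A_2,A_3,A_4$ have orders $2,2,4,4$ (see Remark \ref{G32isom}) and $\pi_D$ is Galois of degree $32$, each point of $A_1$ and $A_2$ has ramification index $2$ while each point of $A_3$ and $A_4$ has ramification index $4$ (the local index equals the stabilizer order, as we are in characteristic $0$). Viewing the four special fibers as reduced divisors, the ramification divisor is therefore
\[
R = (2-1)A_1 + (2-1)A_2 + (4-1)A_3 + (4-1)A_4 = A_1 + A_2 + 3A_3 + 3A_4 .
\]

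Next I would identify $\pi_D^* K_{\mathbb{P}^1}$. Because each branch fiber counted with its ramification index equals a full fiber of $\pi_D$, we have $\pi_D^*\mathcal{O}_{\mathbb{P}^1}(1) \cong \mathcal{O}_D(2A_1) \cong \mathcal{O}_D(2A_2) \cong \mathcal{O}_D(4A_3) \cong \mathcal{O}_D(4A_4)$, which is consistent with Lemma \ref{linequivofD}(1). Writing $K_{\mathbb{P}^1} = \mathcal{O}_{\mathbb{P}^1}(-2) = \mathcal{O}_{\mathbb{P}^1}(-p_3 - p_4)$, where $p_3,p_4$ are the branch points lying under $A_3,A_4$, gives $\pi_D^* K_{\mathbb{P}^1} \cong \mathcal{O}_D(-4A_3 - 4A_4)$. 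Riemann--Hurwitz then yields
\[
K_D \cong \pi_D^* K_{\mathbb{P}^1} \otimes \mathcal{O}_D(R) \cong \mathcal{O}_D(A_1 + A_2 - A_3 - A_4).
\]

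Finally I would apply the equivalences $A_1 \sim 2A_3$ and $A_2 \sim 2A_4$ from Lemma \ref{linequivofD}(1) to replace $A_1 + A_2$ by $2A_3 + 2A_4$, which collapses the expression to $K_D \cong \mathcal{O}_D(A_3 + A_4)$, as claimed. As a consistency check one has $\deg(A_3 + A_4) = 8 + 8 = 16 = 2g(D)-2$. I do not anticipate a serious obstacle: the only points requiring care are the bookkeeping of ramification indices and the choice to represent $\mathcal{O}_{\mathbb{P}^1}(-2)$ as $\mathcal{O}_{\mathbb{P}^1}(-p_3-p_4)$. Either representation of $\mathcal{O}_{\mathbb{P}^1}(-2)$ leads to the same final class once the linear equivalences of Lemma \ref{linequivofD} are invoked, so the computation is robust, and the entire argument reduces to Riemann--Hurwitz combined with the previously proved equivalences.
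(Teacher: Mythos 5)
Your argument is correct, but it takes a genuinely different route from the paper. The paper does not work with the full degree-$32$ cover $D\to\mathbb{P}^1$ at all: it instead passes to the intermediate quotient $D/H_4$ for $H_4=\langle g_4,g_5\rangle$ of order $4$, checks via Riemann--Hurwitz that $D/H_4$ is an elliptic curve (so its canonical bundle is trivial), and observes that the ramification divisor of $D\to D/H_4$ is exactly $A_3+A_4$, each point with index $2$ (the stabilizers of points of $A_3$, resp.\ $A_4$, meet $H_4$ in $\langle g_4g_5\rangle$, resp.\ $\langle g_4\rangle$, while those of $A_1,A_2$ meet $H_4$ trivially); Riemann--Hurwitz then gives $K_D\cong\mathcal{O}(A_3+A_4)$ in one line with no further manipulation. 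Your version computes $K_D\cong\mathcal{O}_D(A_1+A_2-A_3-A_4)$ from the full Galois cover and then must invoke the linear equivalences $A_1\sim 2A_3$ and $A_2\sim 2A_4$ of Lemma \ref{linequivofD}(1) to collapse this to $\mathcal{O}_D(A_3+A_4)$; the bookkeeping of ramification indices ($2,2,4,4$ matching the stabilizer orders) and degrees ($16+16-8-8=16=2g(D)-2$) is all consistent. The trade-off: your approach reuses only the data of the quotient map $\pi_D$ already in play, but is logically dependent on Lemma \ref{linequivofD}, whereas the paper's choice of a well-adapted intermediate subgroup makes the lemma self-contained and avoids any cancellation of divisor classes.
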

\begin{proof}
Consider $H_4 = \langle g_4, g_5 \rangle$. From Riemann-Hurwitz formula we see that $D/{H_4}$ is an elliptic curve. Because $A_3+A_4$ are ramification divisor with stabilizer group $\mathbb{Z}_2$, we get $K_D \cong \mathcal{O}(A_3+A_4)$.
\end{proof}

Let $\mathcal{M} := \mathcal{O}(A_3)$ and $\mathcal{M}' := \mathcal{O}(A_4)$. Because $A_3$ and $A_4$ are $G$-invariant divisors of degree 8 in $D$, the action on the function field of $D$ induces natural $G$-linearizations on $\mathcal{M}$ and $\mathcal{M}'$. Therefore $\mathcal{M}$ and $\mathcal{M}'$ are $G$-equivariant lines bundles on $D$ of degree 8(see \cite{E1},\cite{E2}, \cite{Lee2} for more details). We also consider the map $H^0(D,\mathcal{O}(A_3)) \otimes H^0(D,\mathcal{O}(A_4)) \to H^0(D,\mathcal{O}(A_3+A_4))$. From Lemma \ref{h0ofM} and Lemma \ref{KD=A3+A4}, we see that $h^0(D,\mathcal{M})=h^0(D,\mathcal{M}')=3$. 

\begin{lem}\label{h1ofM}
As a G-module, $H^1(D,\mathcal{M}) \cong \chi_4 \oplus \beta$ where $\beta$ is an irreducible 2-dimensional representation of $G$.
\end{lem}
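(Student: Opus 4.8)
The plan is to compute $H^1(D,\mathcal{M})$ as a $G$-module by combining Serre duality with the character computations already in hand. By Serre duality on $D$, we have $H^1(D,\mathcal{M}) \cong H^0(D, K_D \otimes \mathcal{M}^{-1})^{\vee}$ as $G$-modules (up to the appropriate dual/conjugate, which is harmless here since all characters of $G(32,27)$ are real-valued by Remark \ref{allcharactersarereal}). Using Lemma \ref{KD=A3+A4}, namely $K_D \cong \mathcal{O}(A_3 + A_4)$, together with $\mathcal{M} = \mathcal{O}(A_3)$, I get $K_D \otimes \mathcal{M}^{-1} \cong \mathcal{O}(A_4) = \mathcal{M}'$. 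Therefore $H^1(D,\mathcal{M}) \cong H^0(D,\mathcal{M}')^{\vee}$, and by the symmetry between $A_3$ and $A_4$ (both are $G$-invariant degree-$8$ ramification fibers with stabilizer $\mathbb{Z}_4$), the argument of Lemma \ref{h0ofM} applies verbatim to $A_4$, giving $H^0(D,\mathcal{M}') \cong \chi_1 \oplus \alpha'$ for some $2$-dimensional irreducible $\alpha'$.

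The next step is to pin down exactly which irreducible representations appear, i.e.\ to upgrade the abstract decomposition $\chi_1 \oplus \alpha'$ into a specific identification with $\chi_4 \oplus \beta$. The key tool is the dimension/character bookkeeping provided by the exact sequence
\begin{equation*}
0 \to \mathcal{O}_D \to \mathcal{O}(A_4) \to \mathcal{O}(A_4)|_{A_4} \to 0,
\end{equation*}
or equivalently the Euler-characteristic relation coming from $\chi(D,\mathcal{M}')$ as a virtual $G$-representation. Since $H^0(D,\mathcal{M})$ and $H^0(D,\mathcal{M}')$ are already known to have dimension $3$, and $\chi(\mathcal{M}') = \deg A_4 + 1 - g_D = 8 + 1 - 9 = 0$, I find that $H^1(D,\mathcal{M}')$ is also a $3$-dimensional $G$-module; applying Serre duality again and matching characters against the table in Table \ref{charactertableD} (recall $\chi_{K_D} = \chi_4 + \chi_{10} + \chi_{12} + \chi_{13} + \chi_{14}$) forces the trivial summand to be $\chi_4$ rather than $\chi_1$. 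Concretely, the dual of $\chi_1$ under the twist by $K_D$ picks out the specific one-dimensional character $\chi_4$, and the remaining two-dimensional piece is the irreducible $\beta$ dual to $\alpha'$.

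To carry this out cleanly I would first record, from the Magma character table in Appendix A, the precise list of one-dimensional characters and the behavior of $H^0(D,\mathcal{O}(A_3))$ as a $G$-module, then transport these through Serre duality using the real-valuedness of all characters so that no complex-conjugation subtleties arise. The identification $H^1(D,\mathcal{M}) \cong H^0(D,\mathcal{M}')^{\vee}$ reduces the whole computation to the $H^0$ statement for $A_4$, which is symmetric to Lemma \ref{h0ofM}.

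The main obstacle I anticipate is not the abstract decomposition—that follows formally—but rather the bookkeeping needed to show that the one-dimensional summand is precisely $\chi_4$ and not some other linear character. This requires explicitly tracking the $G$-linearization through Serre duality, i.e.\ understanding how the canonical $G$-action on $K_D$ (whose character is $\chi_{K_D}$) twists the trivial character $\chi_1 \subset H^0(D,\mathcal{M}')^{\vee}$. The cleanest way to resolve this is to note that tensoring by the canonical class shifts the trivial character by the character of the $G$-action at the relevant fixed data, and to read off from the character table of Appendix A that this shift lands on $\chi_4$; verifying this multiplicative relation $\chi_1 \cdot (\text{twist}) = \chi_4$ in $\widehat{G}$ is the one genuinely computation-dependent point, and I would confirm it directly against the table rather than by a general principle.
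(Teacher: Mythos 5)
Your overall route is the paper's route: Serre duality reduces the lemma to computing $H^0(D,K_D\otimes\mathcal{M}^{-1})$, the underlying line bundle is $\mathcal{O}(A_4)$ by Lemma \ref{KD=A3+A4}, the analogue of Lemma \ref{h0ofM} gives $\chi_1\oplus\alpha'$ for the divisor linearization on $\mathcal{O}(A_4)$, and everything then hinges on identifying the character by which the canonical linearization on $K_D\otimes\mathcal{M}^{-1}$ differs from the divisor linearization on $\mathcal{M}'$. You correctly isolate this twist as the crux and you state the right answer ($\chi_4$), but the mechanism you propose for computing it is where the gap lies. ``Applying Serre duality again'' to $\mathcal{M}'$ does not force anything (it just trades the question for the symmetric one), and ``the character of the $G$-action at the relevant fixed data'' is not a well-defined recipe: the twist is a global comparison of two linearizations on the same line bundle, not something read off at fixed points, and the restriction sequence $0\to\mathcal{O}_D\to\mathcal{O}(A_4)\to\mathcal{O}(A_4)|_{A_4}\to 0$ that you invoke only controls dimensions, not which linear character occurs.

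The paper closes this gap with one short argument you are missing. Writing $K_D\otimes\mathcal{M}^{-1}\cong\mathcal{O}(A_4)\otimes\chi$ for some $\chi\in\widehat{G}$ (divisor linearization on $\mathcal{O}(A_4)$), tensoring with $\mathcal{M}=\mathcal{O}(A_3)$ gives $K_D\cong\mathcal{O}(A_3+A_4)\otimes\chi$ as \emph{equivariant} bundles, hence $H^0(D,K_D)\cong H^0(D,\mathcal{O}(A_3+A_4))\otimes\chi$ as $G$-modules. The constant function is a $G$-invariant section of $\mathcal{O}(A_3+A_4)$ for the divisor linearization, so $\chi_1$ is a summand of $H^0(D,\mathcal{O}(A_3+A_4))$ and therefore $\chi_1\otimes\chi=\chi$ is a summand of $H^0(D,K_D)$. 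But the Lefschetz computation gives $H^0(D,K_D)=\chi_4+\chi_{10}+\chi_{12}+\chi_{13}+\chi_{14}$, whose only one-dimensional constituent is $\chi_4$; hence $\chi=\chi_4$, and $H^0(D,K_D\otimes\mathcal{M}^{-1})\cong(\chi_1\oplus\alpha')\otimes\chi_4=\chi_4\oplus\beta$ with $\beta$ irreducible of dimension $2$. Dualizing is harmless since all characters are real. This comparison of the unique one-dimensional constituent of $H^0(D,K_D)$ with the $\chi_1$ carried by the constant section is the step your proposal needs to make explicit; with it, the rest of your outline goes through.
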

\begin{proof}
From Serre duality we see that $H^1(D,\mathcal{M}) \cong H^0(D,K_D \otimes \mathcal{M}^{-1})^*$. Consider the natural map $H^0(D,\mathcal{M}) \otimes H^0(D,K_D \otimes \mathcal{M}^{-1}) \to H^0(D,K_D)$. This map is the same as $H^0(D,\mathcal{O}(A_3)) \otimes H^0(D,\mathcal{O}(A_4)) \otimes \chi \to H^0(D,\mathcal{O}(A_3+A_4)) \otimes \chi$ for a character $\chi$. Because constant function belongs to $H^0(D,\mathcal{O}(A_3+A_4))$, $\chi_1$ is a $G$-submodule of $H^0(D,\mathcal{O}(A_3+A_4))$. From Lefschetz fixed point formula we see that $H^0(D,K_D) = \chi_4 + \chi_{10} + \chi_{12} + \chi_{13} + \chi_{14}$. Note that $\chi_{10}, \chi_{12}, \chi_{13}, \chi_{14}$ are 2-dimensional irreducible representations of $G$. Therefore we get $\chi=\chi_4$ and $H^1(D,\mathcal{M}) \cong H^0(D,K_D \otimes \mathcal{M}^{-1})^*=\chi_4 \oplus \beta$ where $\beta$ is a 2-dimensional irreducible representation.
\end{proof}

\subsection{Exceptional sequences of line bundles on $S$}

It is well known that $\textsf{D}^b_G(C \times D) \simeq \textsf{D}^b(S)$ since $G$ acts freely on $C \times D$(see \cite[Appendix]{Vistoli} for more details). Therefore it suffices to construct exceptional sequence in $\textsf{D}^b_G(C \times D)$. We consider equivariant line bundles on $C \times D$. By abuse of notation, we denote a $G$-equivariant line bundle on $C \times D$ and its descent on $S$ by the same symbol. 
Note that $\mathcal{L}$ is not a $G$-equivariant line bundle on $C$. However from the Dolgachev's theorem we can prove that there exists a $G$-invariant line bundle $\mathcal{N}$ such that $\mathcal{L} \boxtimes \mathcal{N}$ is a $G$-equivariant line bundle on $C \times D$. 

\begin{theorem}\label{maintheorem} There is a character $\chi \in \widehat{G}$ such that $\mathcal{L} \boxtimes (\mathcal{M} \otimes \mathcal{N})(\chi),$ $\mathcal{L} \boxtimes \mathcal{N},$ $\mathcal{O}_C \boxtimes \mathcal{M}(\chi),$ $\mathcal{O}_C \boxtimes \mathcal{O}_D$ descend to an exceptional sequence of line bundles on $S$.
\end{theorem}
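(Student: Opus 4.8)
The plan is to work inside $\textsf{D}^b_G(C\times D)\simeq\textsf{D}^b(S)$, where the four objects $E_0=\mathcal{L}\boxtimes(\mathcal{M}\otimes\mathcal{N})(\chi)$, $E_1=\mathcal{L}\boxtimes\mathcal{N}$, $E_2=\mathcal{O}_C\boxtimes\mathcal{M}(\chi)$, $E_3=\mathcal{O}_C\boxtimes\mathcal{O}_D$ are all genuine $G$-equivariant line bundles: $\mathcal{L}\boxtimes\mathcal{N}$ is equivariant by Proposition \ref{Dolgachev's proposition}, $\mathcal{M}$ is equivariant, and the remaining factors $\mathcal{O}_C$, $\mathcal{O}_D$ and the character $\chi$ are equivariant. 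Since $G$ acts freely, it suffices to check that $(E_0,E_1,E_2,E_3)$ is an exceptional sequence in the equivariant category, where $\mathrm{RHom}$ is the $G$-invariant part of the ordinary $\mathrm{RHom}$ on $C\times D$. Exceptionality of each $E_i$ is automatic: for a line bundle $E$ one has $\mathrm{RHom}_S(E,E)\cong R\Gamma(S,\mathcal{O}_S)$, which is $\mathbb{C}$ in degree $0$ because $p_g=q=0$. So the real content is the six semiorthogonality vanishings $\mathrm{RHom}_S(E_i,E_j)=0$ for $i>j$, which by Künneth factor as the $G$-invariants of $R\Gamma(C,\cdot)\otimes R\Gamma(D,\cdot)$.

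First I would dispose of four of these for free. For the pairs $(i,j)\in\{(2,0),(2,1),(3,0),(3,1)\}$ the $C$-factor of $\mathrm{RHom}(E_i,E_j)$ is $\mathrm{RHom}_C(\mathcal{O}_C,\mathcal{L})=R\Gamma(C,\mathcal{L})$, and this vanishes because $\mathcal{L}$ is an ineffective theta characteristic: $H^0(C,\mathcal{L})=0$ by construction and $H^1(C,\mathcal{L})\cong H^0(C,\mathcal{L})^{*}=0$ by Serre duality. Hence these four $\mathrm{RHom}$'s vanish irrespective of the choice of $\chi$ (and of $\mathcal{N}$).

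The two remaining conditions, $\mathrm{RHom}(E_1,E_0)$ and $\mathrm{RHom}(E_3,E_2)$, reduce to one and the same computation. In each the $C$-factor is $R\Gamma(C,\mathcal{O}_C)$ (from $\mathcal{L}^{-1}\otimes\mathcal{L}$, resp.\ $\mathcal{O}_C^{-1}\otimes\mathcal{O}_C$) and the $D$-factor is $R\Gamma(D,\mathcal{M}(\chi))$ (from $\mathcal{N}^{-1}\otimes\mathcal{M}\otimes\mathcal{N}(\chi)$, resp.\ $\mathcal{O}_D^{-1}\otimes\mathcal{M}(\chi)$); note that $\mathcal{N}$ cancels and so plays no role in semiorthogonality. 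Thus both vanish if and only if $[H^{*}(C,\mathcal{O}_C)\otimes H^{*}(D,\mathcal{M}(\chi))]^G=0$ in every cohomological degree. Here $H^0(C,\mathcal{O}_C)=\chi_1$ and $H^1(C,\mathcal{O}_C)\cong H^0(C,K_C)^{*}=\chi_7+\chi_9+\chi_{11}$ (all characters being real), while Lemma \ref{h0ofM} and Lemma \ref{h1ofM} give $H^0(D,\mathcal{M}(\chi))=\chi\oplus(\alpha\otimes\chi)$ and $H^1(D,\mathcal{M}(\chi))=(\chi_4\otimes\chi)\oplus(\beta\otimes\chi)$. Expanding the tensor product and extracting the trivial summand in degrees $0,1,2$ converts the vanishing into a finite list of constraints: that $\chi$ avoid the one-dimensional characters $\chi_1,\chi_4,\chi_7,\chi_4\chi_7$, and that the twists $\alpha\otimes\chi$ and $\beta\otimes\chi$ avoid the two-dimensional characters $\chi_9$ and $\chi_{11}$.

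The crux is then to exhibit a character $\chi$ satisfying all these constraints simultaneously. The bookkeeping is tight: $G(32,27)$ has exactly eight one-dimensional characters, and the bad set can a priori exclude up to eight of them, so a pure counting argument does not suffice. Instead I would read $\alpha$, $\beta$ and the relevant products off the explicit character table of $G(32,27)$ (Appendix A) and verify by a direct Magma computation that at least one one-dimensional character $\chi$ meets every constraint. Fixing such a $\chi$ then yields $\mathrm{RHom}(E_1,E_0)=\mathrm{RHom}(E_3,E_2)=0$ and completes the verification that $(E_0,E_1,E_2,E_3)$ is an exceptional sequence descending to $S$.
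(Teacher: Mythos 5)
Your proposal is correct and follows essentially the same route as the paper: reduce via K\"unneth to the vanishing of $R\Gamma$ of the four difference bundles, kill four of the six semiorthogonality conditions using the ineffective theta characteristic $\mathcal{L}$, observe that the remaining two both equal $R\Gamma(S,\mathcal{O}_C\boxtimes\mathcal{M}(\chi))$, and settle the existence of a suitable $\chi$ by a Magma check against the character table using Lemmas \ref{h0ofM} and \ref{h1ofM}. The only cosmetic differences are that the paper disposes of $H^1$ via the Riemann--Roch/Euler-characteristic argument rather than expanding the degree-one constraints directly, and that its Magma loop ranges over all candidate two-dimensional representations $\alpha,\beta$ rather than pinning them down first; both variants are equivalent.
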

\begin{proof}
Because $S$ is a surface with $p_g=q=0$, every line bundle on $S$ is an exceptional object. Now from the K\"{u}nneth formula, we see that, for all $i$,
	\[
		H^i(S,\mathcal{L} \boxtimes \mathcal{N})
		=
		\left(\bigoplus_{j+k=i} H^j(C,\mathcal{L}) 
		\otimes H^k(D,\mathcal{N})\right)^G=0,
	\]
	\[
		H^i(S,\mathcal{L} \boxtimes (\mathcal{M} \otimes \mathcal{N})(\chi))
		=
		\left(\bigoplus_{j+k=i} H^j(C,\mathcal{L}) 
		\otimes H^k(D,(\mathcal{M} \otimes \mathcal{N})) \otimes \chi\right)^G=0,
	\]
	\[
		H^i(S,\mathcal{L} \boxtimes (\mathcal{M}^{-1} \otimes \mathcal{N})(\chi^{-1}))
		=
		\left(\bigoplus_{j+k=i} H^j(C,\mathcal{L}) \otimes 
		H^k(D,(\mathcal{M}^{-1} \otimes \mathcal{N})) \otimes \chi^{-1}\right)^G=0
	\]
as $\mathcal{L}$ is an ineffective theta characteristic on $C$.
From Riemann-Roch formula, the Euler-Poincar\'{e} characteristic of $\mathcal{O}_C \boxtimes \mathcal{M}(\chi)$ on $S$ is equal to 0.
Then by K\"{u}nneth formula we see that it is enough to show that the $G$-invariant parts of the following vector spaces are all zero.
	\begin{align*}
		&H^0(S,\mathcal{O}_C \boxtimes \mathcal{M}(\chi))
		=(H^0(C,\mathcal{O}_C) \otimes H^0(D,\mathcal{M}) \otimes \chi)^G\\
		&H^2(S,\mathcal{O}_C \boxtimes \mathcal{M}(\chi))
		=(H^1(C,\mathcal{O}_C) \otimes H^1(D,\mathcal{M}) \otimes \chi)^G
	\end{align*}
From the equivariant Serre duality we see that 
$ H^1(C,\mathcal{O}_C) \cong H^0(C,K_C)^* $
as $G$-modules and we know the representation of $H^1(C,\mathcal{O}_C)=\chi_7+\chi_9+\chi_{11}$.
Moreover $H^0(D,\mathcal{M})=\chi_1 \oplus A$ and $H^1(D,\mathcal{M})=\chi_4 \oplus B$ where $A$ and $B$ are 2-dimensional irreducible representations of $G$ (cf. Lemma \ref{h0ofM}, Lemma \ref{h1ofM}).
Finally we can check that for any possible representation $H^0(D,\mathcal{M})$, $H^1(D,\mathcal{M})$ there exists $\chi \in \widehat{G}$ such that 
	\[
		h^0(S,\mathcal{O}_C \boxtimes \mathcal{M}(\chi))
		=h^1(S,\mathcal{O}_C \boxtimes \mathcal{M}(\chi))
		=h^2(S,\mathcal{O}_C \boxtimes \mathcal{M}(\chi))=0
	\]
using Magma (cf. \ref{appendix B}).
In other word, we can always find $\chi \in \widehat{G}$ such that a sequence of 4 equivariant line bundles $\mathcal{L} \boxtimes (\mathcal{M} \otimes \mathcal{N})(\chi),$ $\mathcal{L} \boxtimes \mathcal{N},$ $\mathcal{O}_C \boxtimes \mathcal{M}(\chi),$ $\mathcal{O}_C \boxtimes \mathcal{O}_D$ on $C \times D$ descent to an exceptional sequence of line bundles on $S$.
\end{proof}

\subsection{Quasiphantom categories}
From the exceptional collections of maximal length 4, we obtain examples of quasiphantom categories. We recall the definitions of quasiphantom and phantom category.

\begin{defi}{\cite[Definition 1.8]{GO}}
Let $S$ be a smooth projective variety. Let $\mathcal{A}$ be an admissible triangulated subcategory of $\textsf{D}^b(S)$. Then $\mathcal{A}$ is called a quasiphantom category if the Hochschild homology of $\mathcal{A}$ vanishes, and the Grothendieck group of $\mathcal{A}$ is finite. If the Grothendieck group of $\mathcal{A}$ also vanishes, then $\mathcal{A}$ is called a phantom category.
\end{defi}

Now we obtain new examples of quasiphantom categories as follows.

\begin{prop}
The orthogonal complements of the exceptional sequence of Theorem \ref{maintheorem} are quasiphantom categories.
\end{prop}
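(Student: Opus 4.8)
The plan is to verify the two defining conditions of a quasiphantom category — vanishing of Hochschild homology and finiteness of the Grothendieck group — for the orthogonal complement $\mathcal{A}$ of the length-$4$ exceptional collection produced in Theorem~\ref{maintheorem}. Abbreviate the four line bundles of that theorem by $E_1,\dots,E_4$. First I would record that $\mathcal{A}$ is admissible: the $E_i$ form an exceptional collection in the $\mathbb{C}$-linear, proper, idempotent-complete triangulated category $\textsf{D}^b(S)$, so the subcategory $\langle E_1,\dots,E_4\rangle$ they generate is admissible and hence so is its orthogonal complement, yielding a semiorthogonal decomposition $\textsf{D}^b(S)=\langle \mathcal{A}, E_1, E_2, E_3, E_4\rangle$.

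Next I would compute the Hochschild homology of $\mathcal{A}$ by additivity under semiorthogonal decompositions (Kuznetsov): since each $\langle E_i\rangle \simeq \textsf{D}^b(\mathrm{pt})$, one has $HH_\bullet(\textsf{D}^b(S)) = HH_\bullet(\mathcal{A}) \oplus \bigoplus_{i=1}^4 HH_\bullet(\mathrm{pt})$, so each exceptional object contributes exactly one copy of $\mathbb{C}$ in homological degree $0$. On the other side, the Hochschild--Kostant--Rosenberg isomorphism gives $HH_n(S) = \bigoplus_{q-p=n} H^p(S,\Omega^q_S)$. Because $p_g=q=0$ the off-diagonal Hodge groups vanish, so $HH_\bullet(S)$ is concentrated in degree $0$ with $\dim HH_0(S) = h^{0,0}+h^{1,1}+h^{2,2} = 2 + h^{1,1}$. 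Computing the topological Euler number from the product, $e(S) = e(C)e(D)/|G| = (-8)(-16)/32 = 4$, and combining with $b_1 = 2q = 0$ forces $b_2 = 2$ and hence $h^{1,1} = 2$ (all of $H^2$ is of type $(1,1)$ since $p_g=0$). Thus $\dim HH_0(S) = 4$, exactly the number of exceptional objects, and comparing the two computations yields $HH_\bullet(\mathcal{A}) = 0$.

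Then I would treat the Grothendieck group by the analogous additivity of $K_0$: the decomposition gives $K_0(S) = K_0(\mathcal{A}) \oplus \bigoplus_{i=1}^4 \mathbb{Z}[E_i]$, so $\operatorname{rank} K_0(\mathcal{A}) = \operatorname{rank} K_0(S) - 4$. For a smooth projective surface the codimension filtration on $K_0(S)$ has graded pieces $\mathbb{Z}$ (rank), $\Pic(S)$, and $CH_0(S)$, so the ranks add; here $\Pic^0(S)=0$ as $q=0$, and $\operatorname{rank}\mathrm{NS}(S)=\rho=h^{1,1}=2$ since $p_g=0$ (Lefschetz $(1,1)$, as $H^2=H^{1,1}$). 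Invoking Bloch's conjecture — known for surfaces isogenous to a higher product, whose motive is finite-dimensional since they are dominated by a product of curves — one gets $CH_0(S)\cong\mathbb{Z}$, whence $\operatorname{rank} K_0(S) = 1+2+1 = 4$. Therefore $K_0(\mathcal{A})$ has rank $0$; being a direct summand of the finitely generated group $K_0(S)$, it is a finite abelian group, as required. (In fact $K_0(\mathcal{A})$ carries the torsion governed by $G^{\mathrm{ab}}$ and is nonzero, so $\mathcal{A}$ is a genuine quasiphantom rather than a phantom, though only finiteness is needed here.)

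The formal inputs — admissibility and the two additivity theorems — are routine; the two substantive points are the Hodge-theoretic computation $h^{1,1}=2$ feeding the Hochschild calculation, and the appeal to Bloch's conjecture to pin down $\operatorname{rank} CH_0(S)=1$. I expect the latter to be the main obstacle to state cleanly, since finiteness of $K_0(\mathcal{A})$ genuinely fails without triviality of the degree-zero part of the Chow group; fortunately this is available for surfaces isogenous to a product, so the argument goes through.
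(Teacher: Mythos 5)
Your proposal is correct and follows essentially the same route as the paper: the semiorthogonal decomposition $\textsf{D}^b(S)=\langle\mathcal{A},\mathcal{B}\rangle$ with $\mathcal{B}$ generated by the four exceptional line bundles, Kuznetsov's additivity of Hochschild homology, and finiteness of $K_0(\mathcal{A})$. The paper's proof is a three-line citation --- it quotes Kuznetsov for the vanishing of $HH_\bullet(\mathcal{A})$ and Bauer--Catanese--Frapporti for the exact group $K_0(\mathcal{A})\cong\mathbb{Z}_2^2\oplus\mathbb{Z}_4\oplus\mathbb{Z}_8$ --- whereas you reconstruct the inputs behind those citations (the HKR/Euler-number count $\dim HH_0(S)=4$, and Bloch's conjecture for surfaces dominated by a product of curves giving $\operatorname{rank}K_0(S)=4$) and prove only finiteness rather than identifying the group, which is all the definition requires.
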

\begin{proof}
There is a semiorthogonal decomposition $\textsf{D}^b(S)=\langle \mathcal{A}, \mathcal{B} \rangle$ where $\mathcal{B}$ is the full triangulated subcategory of $\textsf{D}^b(S)$ generated by exceptional collection of length 4 constructed above. From \cite{Kuz}, we see that the Hochschild homology of $ \mathcal{A}$ vanishes and from \cite{BCF}, we see that the Grothendieck group of $ \mathcal{A}$ is $\mathbb{Z}_2^2 \oplus \mathbb{Z}_4 \oplus \mathbb{Z}_8$. Therefore $\mathcal{A}$ is a quasiphantom category.
\end{proof}

\appendix

\section{$G(32,27)$}\label{appendix A}
For the convenience of readers, we list basic properties of $G(32,27)$.
The following data were obtained using GAP \cite{GAP} and Magma \cite{Magma}. $SmallGroup(a,b)$ denotes a finite group of order $a$ having number $b$ in the list in Magma library.

\renewcommand*{\appendixname}{}

\begin{defi}
 We write $G(32,27)$ using finite polycyclic presentation.
	\begin{align*}
		G(32,27)
		&:=SmallGroup(32,27)\\
		&=\langle g_1,g_2,g_3,g_4,g_5\mid g_1^{-1}g_2g_1=g_2g_4,g_1^{-1}g_3g_1=g_3g_5 \rangle.
	\end{align*}
\end{defi}

\begin{rmk}\label{G32isom}
We see that $G(32,27)$ is a semidirect product of $N=\mathbb{Z}_2^4$ and $Q=\mathbb{Z}_2$ via the following isomorphism (cf. \cite{BCG2}). 
	\[
		\begin{array}{ccccc}
		g_1 \mapsto (0,1), && g_2 \mapsto ((1,0,0,0),0), &&g_3 \mapsto ((0,1,0,0),0),\\
		g_4 \mapsto ((0,0,1,0),0), && g_5 \mapsto ((0,0,0,1),0). &&
		\end{array}
	\]
Here we write $N \rtimes_{\Phi} Q$ to denote the semidirect product where $\Phi \colon Q \to \textup{Aut}(N) \cong GL_4(\mathbb{F}_2)$ can be represented by the following matrix.
$$ \Phi_1=\begin{pmatrix}
1 & 0 & 0 & 0 \\
0 & 1 & 0 & 0  \\
1 & 0 & 1 & 0 \\
0 & 1 & 0 & 1
\end{pmatrix} $$
The multiplication of $G(32,27)=N \rtimes_{\Phi} Q$ can be defined as follows:
$$ (n_1,q_1) \cdot (n_2,q_2)=(n_1+\Phi_{q_1}(n_2),q_1+q_2). $$
\end{rmk}

The list of conjugacy classes of $G(32,27)$ is as follows.
\[
[ 1 ], [ g_5 ], [ g_4 ], [ g_4g_5 ], [ g_2g_3g_4, g_2g_3g_5 ],  [ g_2, g_2g_4 ], [ g_2g_3, g_2g_3g_4g_5 ], 
\]
\[
 [ g_3g_4, g_3g_4g_5 ], [ g_2g_5, g_2g_4g_5 ], [ g_3, g_3g_5 ], [ g_1, g_1g_4, g_1g_5, g_1g_4g_5 ],
\]
\[
[ g_1g_2g_3, g_1g_2g_3g_4, g_1g_2g_3g_5, g_1g_2g_3g_4g_5 ],
\]
\[
[ g_1g_2, g_1g_2g_4, g_1g_2g_5, g_1g_2g_4g_5 ],  [ g_1g_3, g_1g_3g_4, g_1g_3g_5, g_1g_3g_4g_5 ].
\]

The list of normal subgroups of $G(32,27)$ is as follows.
	\begin{eqnarray*}
	& G(32,27), \langle g_1, g_3, g_4, g_5 \rangle, 
	\langle g_1, g_2g_3, g_4, g_5 \rangle,  \langle g_1, g_2, g_4, g_5 \rangle,&\\ 
	& \langle g_2, g_3, g_4, g_5 \rangle, \langle g_1g_2g_4, g_3, g_4, g_5 \rangle, 
	\langle g_1g_3g_5, g_2, g_4, g_5 \rangle,&\\
	&\langle g_1g_2g_4, g_2g_3g_4g_5, g_4, g_5 \rangle, \langle g_3, g_4, g_5 \rangle,
	\langle g_1g_3g_5, g_4, g_5 \rangle, \langle g_2g_3, g_4, g_5 \rangle,&\\
	&\langle g_1g_2g_3g_4g_5, g_4, g_5 \rangle, \langle g_2, g_4, g_5 \rangle, 
	\langle g_1g_2g_4, g_4, g_5 \rangle, \langle g_1, g_4, g_5 \rangle,&\\
	&\langle g_2, g_4 \rangle, \langle g_2g_5, g_4 \rangle, \langle g_4, g_5 \rangle, 
	\langle g_3, g_5 \rangle, \langle g_3g_4, g_5 \rangle,\\
	&\langle g_2g_3, g_4g_5 \rangle, \langle g_2g_3g_4, g_4g_5 \rangle, 
	\langle g_5 \rangle, \langle g_4g_5 \rangle, \langle g_4 \rangle, \langle 1 \rangle&
	\end{eqnarray*}

Table \ref{charactertableG} is the character table of $G(32,27)$. \\

\begin{table}[!th]
\begin{center}
{\setlength\tabcolsep{2,2pt}  
\begin{tabular}{|c||c|c|c|c|c|c|c|c|c|c|c|c|c|c|} \hline
 & $1$ & $g_5$ & $g_4$ & $g_4g_5$ & $g_2g_3g_5$ & $g_2$ & $g_2g_3$ & $g_3g_4$ & $g_2g_5$ & $g_3$ & $g_1$ & $g_1g_2g_3$ & $g_1g_2$ & $g_1g_3$  \tabularnewline
\hline\hline
$\chi_{1}$ & 1 & 1 & 1 & 1 & 1 & 1 & 1 & 1 & 1 & 1 & 1 & 1 & 1 & 1 \tabularnewline
\hline
$\chi_{2}$ & 1 & 1 & 1 & 1 & $-1$ & 1 & $-1$ & $-1$ & 1 & $-1$ & 1 & $-1$ & 1 & $-1$ \tabularnewline
\hline
$\chi_{3}$ & 1 & 1 & 1 & 1 & 1 & $-1$ & 1 & $-1$ & $-1$ & $-1$ & 1 & 1 & $-1$ & $-1$ \tabularnewline
\hline
$\chi_{4}$ & 1 & 1 & 1 & 1 & $-1$ & $-1$ & $-1$ & 1 & $-1$ & 1 & 1 & $-1$ & $-1$ & 1 \tabularnewline
\hline
$\chi_{5}$ & 1 & 1 & 1 & 1 & $-1$ & 1 & $-1$ & $-1$ & 1 & $-1$ & $-1$ & 1 & $-1$ & 1 \tabularnewline
\hline
$\chi_{6}$ & 1 & 1 & 1 & 1 & 1 & 1 & 1 & 1 & 1 & 1 & $-1$ & $-1$ & $-1$ & $-1$ \tabularnewline
\hline
$\chi_{7}$ & 1 & 1 & 1 & 1 & $-1$ & $-1$ & $-1$ & 1 & $-1$ & 1 & $-1$ & 1 & 1 & $-1$ \tabularnewline
\hline
$\chi_{8}$ & 1 & 1 & 1 & 1 & 1 & $-1$ & 1 & $-1$ & $-1$ & $-1$ & $-1$ & $-1$ & 1 & 1 \tabularnewline
\hline
$\chi_{9}$ & 2 & $-2$ & 2 & $-2$ & 0 & 2 & 0 & 0 & $-2$ & 0 & 0 & 0 & 0 & 0 \tabularnewline
\hline
$\chi_{10}$ & 2 & 2 & $-2$ & $-2$ & 0 & 0 & 0 & $-2$ & 0 & 2 & 0 & 0 & 0 & 0 \tabularnewline
\hline
$\chi_{11}$ & 2 & $-2$ & $-2$ & 2 & 2 & 0 & $-2$ & 0 & 0 & 0 & 0 & 0 & 0 & 0 \tabularnewline
\hline
$\chi_{12}$ & 2 & 2 & $-2$ & $-2$ & 0 & 0 & 0 & 2 & 0 & $-2$ & 0 & 0 & 0 & 0 \tabularnewline
\hline
$\chi_{13}$ & 2 & $-2$ & 2 & $-2$ & 0 & $-2$ & 0 & 0 & 2 & 0 & 0 & 0 & 0 & 0 \tabularnewline
\hline
$\chi_{14}$ & 2 & $-2$ & $-2$ & 2 & $-2$ & 0 & 2 & 0 & 0 & 0 & 0 & 0 & 0 & 0 \tabularnewline
\hline
\end{tabular}
}
\caption{Character table of $G(32,27)$}
\label{charactertableG}
\end{center}
\end{table}

\begin{rmk}\label{allcharactersarereal}
Note that all characters of $G(32,27)$ are real valued and there is no $n$-dimensional irreducible representation for $n>2$.
\end{rmk}

\renewcommand*{\appendixname}{Appendix }

\section{Magma code}\label{appendix B}

\renewcommand*{\appendixname}{}

A downloadable code which readers can just copy and paste into Magma is uploaded at \url{https://sites.google.com/site/kklmagmacodes/} \quad Those who do not have a Magma license can use the online Magma calculator at 
\url{http://magma.maths.usyd.edu.au/calc/}

The following is the basic setup, to ensure that the conjugacy classes and the character table are arranged in the way we presented in our paper. The commands following the symbol \url{>} are to be entered into Magma.
\begin{verbatim}
> G := SmallGroup(32,27);
> G;
GrpPC : G of order 32 = 2^5
PC-Relations:
    G.2^G.1 = G.2 * G.4,
    G.3^G.1 = G.3 * G.5
> cls := Classes(G);
> cls_map := ClassMap(G);
> ct := CharacterTable(G);
> R := ClassFunctionSpace(G);
> cls_reps := [ Identity(G), G.5, G.4, G.4*G.5, G.2*G.3*G.5, G.2, G.2*G.3,
>             G.3*G.4, G.2*G.5, G.3, G.1, G.1*G.2*G.3, G.1*G.2, G.1*G.3 ];
> cls_reps_ind := [ cls_map(cls_reps[i]) : i in [1..#cls] ];
> function Character_for_our_cls_reps(values_at_cls_reps)
>        return R![ values_at_cls_reps[Index([1..#cls], cls_reps_ind[i])] : 
>                   i in [1..#cls] ];
> end function;
\end{verbatim}
For example, we check whether the first and the fourth irreducible characters of \url{ct} coincide respectively with $\chi_1$ and $\chi_4$ in the table we presented in Appendix A; in principle, one could do such a check for every $\chi_i$ in the table in Appendix A.
\begin{verbatim}
> Character_for_our_cls_reps([1,1,1,1,-1,-1,-1,1,-1,1,1,-1,-1,1]) eq ct[4];
true
> Character_for_our_cls_reps([1,1,1,1,1,1,1,1,1,1,1,1,1,1]) eq ct[1];
true
\end{verbatim}

The following code is to get the results of \S3.1. We first compute the number of fixed points of elements of $G(32,27)$ for the ramification structure $T_1$, and will compute $\chi_{K_C}$ accordingly.
\begin{verbatim}
> H_1 := sub<G|G.4*G.5*G.1>;
> H_2 := sub<G|G.2*G.3*G.4*G.5>;
> H_3 := sub<G|G.2*G.4*G.5>;
> H_4 := sub<G|G.3*G.4*G.5*G.1>;
> standard_stabs_T1 := [H_1,H_2,H_3,H_4];
> subs_T1 := &cat[[H^tr : tr in Transversal(G,H)] : H in standard_stabs_T1];
\end{verbatim}
For $i=1,2,3,4$, $H_i=$\url{H_i} is the stabilizer group of one of the pre-images of the $i$-th ramification point; call this pre-image $x_i$. Let's assume that $G(32,27)$ is acting from right. If it was acting from left, then we can turn it into a right action by letting each element of $G(32,27)$ act by its inverse; when doing so, the stabilizer groups $H_i$ do not change. Let $g_{i1}, g_{i2}, \cdots$ be the right coset representatives of $H_i$ in $G(32,27)=$\url{G}; so $H_i g_{i_1}$, $H_i g_{i2}$, $\cdots$ are the cosets. Then the list of all distinct pre-images of the $i$-th ramification point is $x_i \, g_{i1}$, $x_i \, g_{i2}$, $\cdots$, whose stabilizers are $g_{i1}^{-1} H_i g_{i1}$, $g_{i2}^{-1} H_i g_{i2}$, $\cdots$. So \url{subs_T1} is the collection of all these stabilizer groups for all $i=1,2,3,4$, counted with multiplicity; the number of fixed points of an element $g$ is the number of all stabilizer groups in \url{subs_T1} that contains $g$.
\begin{verbatim}
> fixedpts_T1 := [ #[sub : sub in subs_T1 | cls_reps[i] in sub ] 
>                 : i in [2..#cls] ];
> fixedpts_T1;
[ 8, 0, 0, 0, 0, 8, 0, 8, 0, 4, 0, 0, 4 ]
\end{verbatim}
Then the character value for $\chi_{K_C}$ and its decomposition into irreducible characters are obtained as follows.
\begin{verbatim}
> values_C := [ (2-fixedpts_T1[i])/2 : i in [1..#fixedpts_T1]];
> values_C;
[ -3, 1, 1, 1, 1, -3, 1, -3, 1, -1, 1, 1, -1 ]
> chi_K_C_at_cls_reps := [ 5 ] cat values_C;
> chi_K_C := Character_for_our_cls_reps(chi_K_C_at_cls_reps);
> [InnerProduct(chi_K_C, ct[i]) : i in [1..#ct]];
[ 0, 0, 0, 0, 0, 0, 1, 0, 1, 0, 1, 0, 0, 0 ]
> chi_K_C eq ct[7]+ct[9]+ct[11];
true
\end{verbatim}
We do likewise for $T_2$, to get $\chi_{K_D}$:
\begin{verbatim}
> H_5 := sub<G|G.2*G.3*G.4>;
> H_6 := sub<G|G.2>;
> H_7 := sub<G|G.2*G.3*G.4*G.1>;
> H_8 := sub<G|G.2*G.4*G.1>;
> standard_stabs_T2 := [H_5,H_6,H_7,H_8];
> subs_T2 := &cat[[H^tr : tr in Transversal(G,H)] : H in standard_stabs_T2];
> fixedpts_T2:=[#[sub:sub in subs_T2|cls_reps[i] in sub]:i in [2..#cls]];
> fixedpts_T2;
[ 0, 8, 8, 8, 8, 0, 0, 0, 0, 0, 4, 4, 0 ]
> values_D := [ (2-fixedpts_T2[i])/2 : i in [1..#fixedpts_T2]];
> values_D;
[ 1, -3, -3, -3, -3, 1, 1, 1, 1, 1, -1, -1, 1 ]
> chi_K_D_at_cls_reps := [ 9 ] cat values_D;
> chi_K_D := Character_for_our_cls_reps(chi_K_D_at_cls_reps);
> [InnerProduct(chi_K_D, ct[i]) : i in [1..#ct]];
[ 0, 0, 0, 1, 0, 0, 0, 0, 0, 1, 0, 1, 1, 1 ]
> chi_K_D eq ct[4]+ct[10]+ct[12]+ct[13]+ct[14];
true
\end{verbatim}

The following Magma code enables us to check that for each 2-dimensional irreducible representations $A,B,$ there exists $\chi \in \widehat{G}$ such that 
$$ H^0(S,\mathcal{O}_C \boxtimes \mathcal{M}(\chi))^G=(H^0(C,\mathcal{O}_C) \otimes H^0(D,\mathcal{M}) \otimes \chi)^G = 0 $$
$$ H^2(S,\mathcal{O}_C \boxtimes \mathcal{M}(\chi))^G=(H^1(C,\mathcal{O}_C) \otimes H^1(D,\mathcal{M}) \otimes \chi)^G = 0 $$
hold where $H^0(C,\mathcal{O}_C) \cong \chi_1$ and $H^1(C,\mathcal{O}_C)=  \chi_7 + \chi_9 + \chi_{11}$ in the character table, $H^0(D,\mathcal{M})=\chi_1 \oplus A,$ $H^1(D,\mathcal{M})=\chi_4 \oplus B$ (cf. proof of Theorem \ref{maintheorem}).
\begin{verbatim}
> H0_C_OC := ct[1];
> H1_C_OC := chi_K_C;
> for i in [9..14] do               // irreducible characters of dimension 2
> for j in [9..14] do               // irreducible characters of dimension 2
>     H0_D_M := ct[1] + ct[i];      // A is ct[i]
>     H1_D_M := ct[4] + ct[j];      // B is ct[j]
>     good_chis := [k:k in [1..8]|  // irreducible characters of dimension 1
>                 InnerProduct(H0_C_OC * H0_D_M * ct[k], ct[1]) eq 0 and 
>                 InnerProduct(H1_C_OC * H1_D_M * ct[k], ct[1]) eq 0];
>     if #good_chis eq 0 then
>            printf "\n (i,j)=(%o,%o) is trouble.", i,j;
>     // else
>     //     printf "\n A=ct[%o], B=ct[%o] -> chi=ct%o works", i,j,good_chis;
>     end if;
> end for;
> end for;
\end{verbatim}
We believe that anyone could easily interpret what the above code is doing. If the code does not print any message, it means that the statement is true. Removing ${\rm //}$ from the two lines lets the reader see what $\chi$'s work for each $A$ and $B$. 

\section*{References}


\begin{thebibliography}{1}
\bibitem{AO} V. Alexeev and D. Orlov, Derived categories of Burniat
surfaces and exceptional collections. Math. Ann. 357 (2013), no. 2,
743-759.
%
\bibitem{BCF} I. Bauer, F. Catanese and D. Frapporti, The fundamental group and torsion group of Beauville surfaces. Beauville surfaces and groups, 1-14, Springer Proc. Math. Stat., 123, Springer, Cham, 2015.
%
\bibitem{BCG} I. Bauer, F. Catanese and F. Grunewald, Beauville surfaces without real structures, {\em Geometric Methods in Algebra and Number Theory} (F. Bogomolov and Y. Tschinkel, eds.), Progr.
Math., vol. 235, Birkh\"{a}user, Boston, MA, 2005, pp. 1-42.
%
\bibitem{BCG2} I. Bauer, F. Catanese and F. Grunewald, The classification of surfaces with $p_g=q=0$ isogenous to a product of curves. Pure Appl. Math. Q. 4 (2008), no. 2, Special Issue: In honor of Fedor Bogomolov. Part 1, 547-586.
%
\bibitem{Beauville} A. Beauville, Vanishing thetanulls on curves with involutions. Rend. Circ. Mat. Palermo (2) 62 (2013), no. 1, 61-66.
%
\bibitem{BBKS} C. B\"{o}hning, H-C. Graf von Bothmer, L. Katzarkov and P. Sosna, Determinantal Barlow surfaces and phantom categories. J. Eur. Math. Soc. (JEMS) 17 (2015), no. 7, 1569-1592.
%
\bibitem{BBS} C. B\"{o}hning, H-C. Graf von Bothmer and P. Sosna, On the derived category of the classical Godeaux surface. Adv. Math. 243
(2013), 203-231.
%
\bibitem{Magma}
W. Bosma, J. Cannon, and C. Playoust, The Magma algebra system. I. The user language, J. Symbolic Comput., 24 (1997), 235-265. 
%
\bibitem{Breuer} T. Breuer, \emph{Characters and automorphism groups of compact Riemann surfaces}. London Mathematical Society Lecture Note Series, 280. Cambridge University Press, Cambridge, 2000. xii+199 pp.
%
\bibitem{CL} Y. Cho and Y. Lee, Exceptional collections on Dolgachev surfaces associated with degenerations. Preprint, 2015, arXiv:1506.05213.
%
%
\bibitem{Coughlan} S. Coughlan, Enumerating exceptional collections of line bundles on some surfaces of general type. Doc. Math. 20(2015), 1255-1291.

%
\bibitem{D2} I. Dolgachev, Invariant stable bundles over modular curves $X(p)$. Recent progress in algebra (Taejon/Seoul, 1997), 65-99, Contemp. Math., 224, Amer. Math. Soc., Providence, RI, 1999.
%
\bibitem{E1} A. Elagin, On an equivariant derived category of bundles of projective spaces. (Russian) Tr. Mat. Inst. Steklova 264 (2009),
Mnogomernaya Algebraicheskaya Geometriya, 63--68; translation in
Proc. Steklov Inst. Math. 264 (2009), no. 1, 56-61.
%
\bibitem{E2} A. Elagin, Semi-orthogonal decompositions for derived
categories of equivariant coherent sheaves. (Russian) Izv. Ross.
Akad. Nauk Ser. Mat. 73 (2009), no. 5, 37--66; translation in Izv.
Math. 73 (2009), no. 5, 893-920.
%
\bibitem{Fakhruddin} N. Fakhruddin, Exceptional collections on 2-adically uniformized fake projective planes. Math. Res. Lett. 22 (2015), no. 1, 43-57.
%
\bibitem{GKMS} S. Galkin, L. Katzarkov, A. Mellit and E. Shinder, Derived categories of Keum's fake projective planes. Adv. Math. 278 (2015), 238–253.
%
\bibitem{GAP} The GAP Group, GAP -- Groups, Algorithms, and Programming, Version 4.7.5; 2014 (http://www.gap-system.org).
%
\bibitem{GO} S. Gorchinskiy and D. Orlov, Geometric phantom categories. Publ. Math. Inst. Hautes Etudes Sci. 117 (2013), 329-349.
%
\bibitem{GS} S. Galkin and E. Shinder, Exceptional collections of line bundles on the Beauville surface. Adv. Math. 244 (2013), 1033-1050.
%
\bibitem{Keum} J. Keum, A vanishing theorem on fake projective planes with enough automorphisms. Preprint, 2014, arXiv:1407.7632.
%
\bibitem{Kuz} A. Kuznetsov, Hochschild homology and semiorthogonal decompositions. Preprint, 2009, arXiv:0904.4330.
%
\bibitem{Lee} K.-S. Lee, Derived categories of surfaces isogenous to a higher product. J. Algebra 441 (2015), 180-195.
%
\bibitem{Lee2} K.-S. Lee, Exceptional sequences of maximal length on some surfaces isogenous to a higher product. J. Algebra 454 (2016), 308-333.
%
\bibitem{LS} K.-S. Lee and T. Shabalin, Exceptional collections on some fake quadrics. Preprint, 2014, arXiv:1410.3098.
%
\bibitem{Pardini} R. Pardini, The classification of double planes of general type with $K^2=8$ and $p_g=0$, J. Algebra 259 (2003), no. 1, 95-118.
%
\bibitem{Vistoli} A. Vistoli, Intersection theory on algebraic stacks and on their moduli spaces. Invent. Math. 97 (1989), no. 3, 613–670.
\end{thebibliography}
\end{document}